\definecolor{verylight}{gray}{0.97}
\definecolor{light}{gray}{0.93}
\definecolor{medium}{gray}{0.82}
 \def\NZQ{\Bbb}               
 \def\QQ{{\NZQ Q}}
 \def\ZZ{{\NZQ Z}}
 \def\frk{\frak}               
 \def\mm{{\frk m}}
 \def\B{{\mathcal B}}
 \def\P{{\mathcal P}}
 \def\ab{{\bold a}}
 \def\bb{{\bold b}}
 \def\xb{{\bold x}}
 \def\cb{{\bold c}}
 \def\opn#1#2{\def#1{\operatorname{#2}}} 
 \opn\chara{char} \opn\length{\ell} \opn\pd{pd} \opn\rk{rk}
 \opn\projdim{proj\,dim} \opn\injdim{inj\,dim} \opn\rank{rank}
 \opn\depth{depth} \opn\grade{grade} \opn\height{height}
 \opn\embdim{emb\,dim} \opn\codim{codim}
 \opn\Tr{Tr} \opn\bigrank{big\,rank}
 \opn\superheight{superheight}\opn\lcm{lcm}
 \opn\trdeg{tr\,deg}
 \opn\reg{reg} \opn\lreg{lreg} \opn\ini{in} \opn\lpd{lpd}
 \opn\size{size} \opn\sdepth{sdepth}
 \opn\link{link}\opn\fdepth{fdepth}\opn\lex{lex}
 \opn\astab{astab}\opn\dist{dist}
 \opn\dstab{dstab}
 \opn\div{div} \opn\Div{Div} \opn\cl{cl} \opn\Cl{Cl}
 \opn\Spec{Spec} \opn\Supp{Supp} \opn\supp{supp} \opn\Sing{Sing}
 \opn\Ass{Ass} \opn\Min{Min}\opn\Mon{Mon}
 \opn\Ann{Ann} \opn\Rad{Rad} \opn\Soc{Soc}
 \opn\Im{Im} \opn\Ker{Ker} \opn\Coker{Coker} \opn\Am{Am}
 \opn\Hom{Hom} \opn\Tor{Tor} \opn\Ext{Ext} \opn\End{End}
 \opn\Aut{Aut} \opn\id{id}
 \opn\nat{nat}
 \opn\pff{pf}
 \opn\Pf{Pf} \opn\GL{GL} \opn\SL{SL} \opn\mod{mod} \opn\ord{ord}
 \opn\Gin{Gin} \opn\Hilb{Hilb}\opn\sort{sort}
 \opn\aff{aff} \opn
\opn\relint{relint} \opn\st{st}
 \opn\lk{lk} \opn\cn{cn} \opn\core{core} \opn\vol{vol}
 \opn\link{link} \opn\star{star}\opn\lex{lex}\opn\set{set}
 \opn\gr{gr}
 \def\Rees{{\mathcal R}}
 \def\pot#1#2{#1[\kern-0.28ex[#2]\kern-0.28ex]}
 \opn\dirlim{\underrightarrow{\lim}}
 \opn\inivlim{\underleftarrow{\lim}}
 \let\dirsum=\oplus
 \let\iso=\cong
 \let\Union=\bigcup
 \let\Dirsum=\bigoplus
 \let\to=\rightarrow
 \let\To=\longrightarrow
 \def\Implies{\ifmmode\Longrightarrow \else
         \unskip${}\Longrightarrow{}$\ignorespaces\fi}
 \def\implies{\ifmmode\Rightarrow \else
         \unskip${}\Rightarrow{}$\ignorespaces\fi}
 \def\iff{\ifmmode\Longleftrightarrow \else
         \unskip${}\Longleftrightarrow{}$\ignorespaces\fi}
 \newtheorem{Theorem}{Theorem}[section]
 \newtheorem{Lemma}[Theorem]{Lemma}
 \newtheorem{Corollary}[Theorem]{Corollary}
 \newtheorem{Proposition}[Theorem]{Proposition}
 \newtheorem{Remark}[Theorem]{Remark}
 \newtheorem{Example}[Theorem]{Example}
 \newtheorem{Examples}[Theorem]{Examples}
 \newtheorem{Definition}[Theorem]{Definition}
 \let\epsilon\varepsilon
 \let\kappa=\varkappa
 \def\qed{\ifhmode\textqed\fi
       \ifmmode\ifinner\quad\qedsymbol\else\dispqed\fi\fi}
 \def\textqed{\unskip\nobreak\penalty50
        \hskip2em\hbox{}\nobreak\hfil\qedsymbol
        \parfillskip=0pt \finalhyphendemerits=0}
 \def\dispqed{\rlap{\qquad\qedsymbol}}
 \opn\dis{dis}
 \def\pnt{{\raise0.5mm\hbox{\large\bf.}}}
 \opn\Lex{Lex}
\begin{document}

 \title {Persistence and stability  properties of powers of ideals}

 \author{J\"urgen Herzog and Ayesha Asloob Qureshi}

\address{J\"urgen Herzog, Fachbereich Mathematik, Universit\"at Duisburg-Essen, Campus Essen, 45117
Essen, Germany} \email{juergen.herzog@uni-essen.de}

\address{Ayesha Asloob Qureshi, Abdus Salam School of Mathematical Sciences, GC University,
Lahore. 68-B, New Muslim Town, Lahore 54600, Pakistan} \email{ayesqi@gmail.com}

\thanks{This paper was partially written during the visit of the  second  author at Universit\"at Duisburg-Essen, Campus Essen. The second  author wants to thank the Abdus Salam International Centre for Theoretical Physics (ICTP), Trieste, Italy and the Abdus Salam School of Mathematical Sciences, Lahore, Pakistan  for  supporting her.
}
\subjclass{13C13, 13A30, 13F99, 05E40}
\keywords{Associated prime ideals, polymatroidal ideals, analytic spread, depth}

 \address{ }\email{}

 \begin{abstract}
We introduce the concept of strong persistence and show that it implies persistence regarding the associated prime ideals of the powers of an ideal. We also show that strong persistence is equivalent to a condition on power of ideals studied by Ratliff. Furthermore, we give an upper bound for the depth of powers of monomial ideals in terms of their linear relation graph, and apply this to show that the index of depth stability and the index of stability for the associated prime ideals of polymatroidal ideals is bounded by their analytic spread.
 \end{abstract}

 \maketitle

\section*{Introduction}

In this paper we study stability properties of powers of ideals. It is known by Brodmann \cite{Br2} that the set of associated prime ideals $\Ass(I^k)$ of $I^k$ of an ideal $I$  in a Noetherian ring $R$ stabilizes, that is, there exist an integer $k_0$ such that $\Ass(I^k) = \Ass(I^{k+1})$ for all $k \geq k_0$. The smallest such integer $k_0$, denoted $\astab(I)$, is called the index of stability for the associated prime ideals of $I$.  It  has been shown, see \cite[Example p.2]{McA} and \cite{BHH} that given any number $n$ there exists an ideal $I$ in a suitable graded ring $R$ and a prime ideal $P$ of $R$ such that for all $k \leq n$, $P \in \Ass(I^k)$ if $k$ is even and $P \not \in \Ass(I^k)$ if $k$ is odd.  From both examples it follows that $\astab(I) \geq n$, and hence this index may exceed any given number. The natural question arises whether there exists a uniform bound for $\astab(I)$ in terms of $R$.  In fact, we do not know of  any example of a regular local ring which admits an ideal with $\astab(I) \geq \dim R$. Thus we are tempted to conjecture that if $R$ is a regular local ring or a polynomial ring and $I \subset R$ is an ideal (which we assume to be graded if $R$ is a polynomial ring), then $\astab(I) < \dim R$. One of the results (Theorem~\ref{astabdstab})  in this paper is to show this conjecture holds true for all polymatroidal ideals.

An ideal is said to satisfy the persistence  property  if $\Ass(I)\subset \Ass(I^2)\subset \cdots\subset \Ass(I^k)\subset\cdots$. The above quoted examples show that not all ideals satisfy this persistence property. But for interesting classes of monomial ideals  persistence  has been shown. This includes edges ideal of graphs \cite{MMV}, vertex cover ideals of perfect graphs \cite{FHV} as well a polymatroidal  ideals \cite{HRV}. There is no example known of a squarefree monomial ideal which does not satisfy persistence. The challenging question, raised in \cite{MMV}, is whether such squarefree monomial ideals exist. In their paper \cite[Corollary 2.17]{MMV} the authors show that for a squarefree monomial ideal $I$ the  associated  prime ideals of the powers form an ascending chain, that is, satisfy the persistence  property,  if $I^{k+1}:I=I^k$ for all $k\geq 1$. Indeed they show that under  this assumption a socle element of $S/I^k$ is multiplied by a generator of $I$ to a socle element  of $S/I^{k+1}$. We say that an ideal  in a Noetherian ring satisfies the strong persistence property if for all its localizations their  socle elements are multiplied from one power to the next, see Section 1 for the precise definition. It is then obvious that the strong persistence property  implies the persistence property. In Section 1 we analyze this concept of strong persistence, and show in Theorem~\ref{firstcharacterization},  inspired by the above mentioned result \cite{MMV}, that an ideal $I$ in a Noetherian ring satisfies strong persistence if and only if $I^{k+1}:I=I^k$ for all $k$. We would like to refer to this property of the colon ideals as the Ratliff condition of $I$, because it was Ratliff who showed in \cite{R} that this condition is satisfied for any normal ideal,  and that $I^{k+1}:I=I^k$ for all $k\gg 0$,  in general.

In Theorem~\ref{stwo} we show that an ideal $I$ of positive grade satisfies strong  persistence and hence the Ratliff condition  if we  require that the Rees ring $\Rees(I)$ satisfies Serre's condition $S_2$ (which is less than normality for which one would also need the Serre condition $N_1$), and in Corollary~\ref{normalCM} the stable set of associated prime ideals of $I$ is described in the case that $\Rees(I)$ is normal or Cohen--Macaulay. This description in the normal case is due to Ratliff \cite{R}.

In Section 2 we reformulate strong persistence for monomial ideals and show that all polymatroidal ideals satisfy strong persistence. The persistence property for polymatroidal ideals has already been shown in \cite{HRV}.  The Stanley--Reisner ideal of the canonical triangulation of the projective plane is an example of an ideal which satisfies persistence but not strong persistence.

For a graded ideal $I$ in the polynomial ring $S=K[x_1, \ldots, x_n]$, the function $f(k) = \depth S/ I^k$ is called the depth function of $I^k$. This function has been first considered in \cite{HH1} and subsequently for the edge ideals of a graph in \cite{CMS}. There is no example known of a squarefree monomial ideal whose depth function is not non-increasing. It is easily seen that monomial ideal with the property that all its monomial localizations  have  non-increasing depth function satisfy the persistence property. On the other hand, it is an open question whether ideals satisfying the persistence property have non-increasing depth functions.

As shown in \cite[Proposition 1.2]{HRV} the ideals with the property that all their powers have a linear resolution admit non-increasing depth functions. In Proposition~\ref{linear-powers}, it is shown that this class of ideals satisfy the strong persistence property with respect to the graded maximal ideal of $S$. It is known, that the $\depth S/I^k$ is constant for $k \gg 0$, see \cite{Br1} and \cite{HH1}. The smallest integer $k$ for which $\depth S/I^k = \depth S/I^{k+\ell}$ for all $\ell \geq 1$ is called the index of depth stability of $I$ and denoted by $\dstab (I)$. As for $\astab (I)$ we expect that $\dstab (I) < n$ for all graded ideals $I \subset S$. In fact we prove in Theorem~\ref{astabdstab} that $\dstab I < \dim S$ for any polymatroidal ideal $I$. The proof of this theorem is based on results of Section 3 where we considered the linear relation graph of a monomial ideal. The main result obtained in this section is presented in Theorem~\ref{depthbounds} where an upper bound of the depth of the powers of $I$ is given in terms of the linear relation graph of the ideal. Comparing the data of the linear relation graph of an ideal with its analytic spread $\ell(I)$ which is done in Lemma~\ref{needed}, it can be shown that for the polymatroidal ideals one has $\depth S/ I^{\ell(I) -1} = n - \ell (I)$. This together with the fact that the limit depth of polymatroidal ideal $I$ is equal to $n - \ell(I)$ is substantially used in the proof of Theorem~\ref{astabdstab}.

In general the indices $\astab(I)$ and $\dstab(I)$ are unrelated, as shown by examples given in \cite{HRV}. On the other hand on the evidence of all known examples we conjecture that $\astab(I) = \dstab(I)$ for all polymatroidal ideals $I$.

\medskip
The authors would like to thank Shamila Bayati for several useful discussions  regarding the subjects of this paper  while she was visiting  Universit\"at Duisburg-Essen.

\section{The strong persistence property}

Let $R$ be a Noetherian ring, and let $I\subset R$ be a proper ideal. We follow the usual convention and  denote by $V(I)$ the set of prime ideals containing $I$ and by  $\Ass(I)$ the set of associated prime ideals of $R/I$. For all $P \in \Spec(R)$, we denote by $\mm_P$ the maximal ideal of the local ring $R_P$.

\begin{Definition}
\label{strong}
{\em Let $P \in V(I)$. We say that $I$ satisfies the {\em strong persistence property with respect to $P$} if for all $k$ and all  $f\in (I_P^k:\mm_P )\setminus I_P^k$  there exists $g\in I_P$  such that  $fg\not \in I_P^{k+1}$. The ideal $I$ is said to satisfy the {\em strong persistence property} if it satisfies the strong persistence property for all $P \in V(I)$.
}
\end{Definition}

From above definition it is clear that it is enough to check the strong persistence property for $I$ only for those prime ideals $P$ and $k$ for which $P \in \Ass(I^k)$. Note further that $f\in (I_P^k:\mm_P) \setminus I_P^k$  if and only if $I_P^k: f=\mm_P$.

\medskip
According to \cite{Van} and \cite{HRV}, a proper ideal $I\subset R$ is said to satisfies the {\em persistence property} (with respect to associated ideals), if
\[
\Ass(I)\subset \Ass(I^2)\subset \cdots \subset \Ass(I^k)\subset \cdots.
\]
In other words, if  $P\in \Ass(I^k)$ for some $k$, then $P\in \Ass(I^{k+1})$ as well. The prime ideals $P$ with this property are called {\em stable} prime ideals. The set of stable prime ideals will be denoted by $\Ass^\infty(I)$.

A well-known result of Brodmann \cite{Br2} says that $\Ass(I^k)$ stabilizes, that is,  there exists an integer $k_0$ such that $\Ass(I^k)=\Ass(I^{k+1})$ for all $k\geq k_0$. The result of Brodmann implies that $\Ass^\infty (I)$ is a finite set.

Note that if $I$ satisfies the persistence property, then
\begin{eqnarray}\label{assinfinity}
\Union_{k\geq 1}\Ass(I^k)=\Ass^\infty(I).
\end{eqnarray}

On the other hand if (\ref{assinfinity}) holds then $I$ does not necessarily satisfy the persistence property. Indeed examples are known for which $\mm \in \Ass (I^k)$ for some $k$, but $\mm$ is not in $\Ass^\infty(I)$, see \cite{BHH} and \cite{MMV}.

It is obvious that the strong persistence property  implies the persistence property. In fact, already the following slightly weaker condition implies the persistence property:
\begin{enumerate}
\item[(1)] For all integers $k\geq 1$  and all $P\in \Ass(I^k)$, there exists  $f\in (I_P^k:\mm_P)\setminus I_P^k$ and $g\in I_P$ such that $fg\not\in I_P^{k+1}$.
\end{enumerate}

\medskip
To show that (1) implies persistence let $P\in \Ass(I^k)$. Then  $\mm_P\in \Ass(I_P^k)$. Therefore (1)  implies that there exist  $f\in (I_P^k:\mm_P)\setminus I_P^k$ and $g\in I_P$ such that $fg\not\in I_P^{k+1}$. Therefore  $I_P^{k+1}:fg\neq R_P$. On the other hand, $\mm_P fg\subset I_P^{k+1}$, so that
$I_P^{k+1}:fg=\mm_P$. This shows that $\mm_P\in \Ass(I_P^{k+1})$, and hence $P\in \Ass(I^{k+1})$.

\medskip

In next result we present a family of ideals in the polynomial ring $S=K[x_1, \ldots, x_n]$ satisfying the strong persistence property with respect to the graded maximal ideal $\mm=(x_1, \ldots, x_n)$.

\begin{Proposition}
\label{linear-powers}
Suppose that $I$ is a graded ideal of $S$. If all powers of $I$ have a linear resolution, then $I$ satisfies the  strong persistence property with respect to $\mm$

\end{Proposition}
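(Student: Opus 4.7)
The argument is a short degree count powered by the Castelnuovo--Mumford regularity consequences of the linear resolution hypothesis. Since every $I^k$ has a linear resolution, $I$ is generated in a single degree $d$, each $I^k$ is generated in degree $kd$, and $\reg(I^k)=kd$; hence $\reg(S/I^k)=kd-1$. Because $(I^k:\mm)/I^k$ lies inside $\Coh{0}{S/I^k}$, whose graded components vanish above degree $\reg(S/I^k)$, every homogeneous element of $(I^k:\mm)\setminus I^k$ has degree at most $kd-1$.

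I would first establish the statement for a homogeneous $f\in (I^k:\mm)\setminus I^k$ in $S$. The preceding observation forces $\deg f\le kd-1$, so picking any minimal generator $g$ of $I$ yields $\deg(fg)\le (k+1)d-1<(k+1)d$. Because $I^{k+1}$ has a linear resolution and is generated in degree $(k+1)d$, every nonzero element of $I^{k+1}$ has degree at least $(k+1)d$; since $S$ is a domain, $fg\neq 0$, and therefore $fg\notin I^{k+1}$. In fact \emph{any} minimal generator $g$ of $I$ does the job.

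To pass to the localized definition for $P=\mm$, note that $(I^k:\mm)/I^k$ is annihilated by $\mm$, so it is a graded $K$-vector space whose canonical map to $(I^k_{\mm}:\mm_{\mm})/I^k_{\mm}$ is an isomorphism. Given $f\in (I^k_{\mm}:\mm_{\mm})\setminus I^k_{\mm}$, I would select a nonzero homogeneous representative $h\in (I^k:\mm)\setminus I^k$ from one of the graded summands of $f\bmod I^k_{\mm}$, apply the homogeneous case to produce a homogeneous $g\in I$ with $hg\notin I^{k+1}$, and read off from the graded decomposition of $S/I^{k+1}$ (using that $I^{k+1}$ is graded and $S\cap I^{k+1}_{\mm}=I^{k+1}$) that the component of $fg$ corresponding to $hg$ is a nonzero scalar multiple of $hg$, so $fg\notin I^{k+1}_{\mm}$. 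The essential content is the degree gap dictated by the linear resolutions of $I^k$ and $I^{k+1}$; the only delicate point is the descent from $S_{\mm}$ to the graded ring $S$, which is routine once one observes that the socle of $S/I^k$ is intrinsically a graded $K$-vector space.
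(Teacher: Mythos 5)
Your argument is correct and is essentially the paper's proof: both reduce to the observation that the socle of $S/I^k$ is concentrated in degrees at most $dk-1$, so multiplying a socle representative by a degree-$d$ generator produces a nonzero element of degree $<(k+1)d$, which cannot lie in $I^{k+1}$ since $I^{k+1}$ is generated in degree $(k+1)d$. The only (harmless) difference is that you obtain the degree bound on the socle from $\reg(S/I^k)=dk-1$ and the vanishing of $\Coh{0}{S/I^k}$ in higher degrees, whereas the paper reads it off from the top Koszul homology $H_n(x_1,\ldots,x_n;S/I^k)\iso((I^k:\mm)/I^k)(-n)$ and the last Betti number of the linear resolution; your extra care with the descent from $S_\mm$ to the graded ring is a point the paper leaves implicit.
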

\begin{proof}
 Let $f \in (I^k : \mm) \setminus I^k$. The existence of $f$ implies that $\depth (S/ I^k) =0$. Let $I$ be generated in degree $d$. Then, since $I^k$ has a linear resolution it follows that the last module in the minimal graded free resolution of $I^k$ is in the form of $S(-(dk+n-1))^{\beta_{n,dk+n-1}}$. It follows that $\Tor_n^S (S/ \mm, S/ I^k)$ is isomorphic to the graded $K$-vector space $S(-(dk+n-1))^{\beta_{n,dk+n-1}}$. On the other hand, there are the following isomorphisms of graded $K$-vector spaces
\[
\Tor_n^S (S/ \mm, S/I^k) \iso H_n (x_1, \ldots, x_n; S/ I^k) \iso ((I^k : \mm) / I^k) (-n)
\]

It follows that $f= f_1 + f_2$ where $f_1 \in I^k : \mm$ is a non-zero homogenous element of degree $dk-1$ and $f_2 \in I^k$. Let $g \in I$ be an element of degree $d$. Then $gf \notin I^{k+1}$ because degree of $\deg gf = d(k+1)-1$.
\end{proof}

\medskip

The following algebraic condition on $I$ characterizes strong persistence.

\begin{Theorem}
\label{firstcharacterization}
The ideal $I\subset R$ satisfies the strong persistence property if and only if $I^{k+1}:I=I^k$ for all $k$.
\end{Theorem}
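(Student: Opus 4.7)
The plan is to prove both implications by combining the standard identity $(J:a)_P = J_P : (a/1)$ for an element $a$ with the fact that, since $I$ is finitely generated in the Noetherian ring $R$, $(I^{k+1}:I)_P = I_P^{k+1}:I_P$ for every prime $P$. Both implications will turn on these localization formulas.

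For the easier direction, assume the Ratliff condition $I^{k+1}:I = I^k$ for all $k$. Given $P \in V(I)$ and $f \in (I_P^k:\mm_P) \setminus I_P^k$, localization of the hypothesis gives $I_P^{k+1}:I_P = I_P^k$, and since $f \notin I_P^k$ we conclude $fI_P \not\subseteq I_P^{k+1}$. This is exactly the conclusion required by strong persistence at $P$.

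For the converse, the inclusion $I^k \subseteq I^{k+1}:I$ is automatic, so it suffices to assume, for contradiction, that some $x \in (I^{k+1}:I) \setminus I^k$ exists and derive a contradiction from strong persistence. Since $\bar{x}$ is nonzero in $R/I^k$, the cyclic submodule $R\bar{x}$ has an associated prime, that is, $P = \Ann(r\bar{x})$ for some $r \in R$ with $y := rx \notin I^k$. The element $y$ inherits the property $yI \subseteq I^{k+1} \subseteq I^k$, so $I \subseteq I^k:y = P$ and thus $P \in V(I)$. Localizing we obtain $I_P^k : (y/1) = \mm_P$, hence $y/1 \in (I_P^k:\mm_P) \setminus I_P^k$. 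Strong persistence at $P$ then produces $g \in I_P$ with $yg \notin I_P^{k+1}$, contradicting $yI_P \subseteq I_P^{k+1}$.

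The main obstacle is the converse direction, where I must locate a prime $P \in V(I)$ at which strong persistence can be applied. The key device is to replace the hypothetical element $x \in (I^{k+1}:I) \setminus I^k$ by a suitable multiple $rx$ whose annihilator modulo $I^k$ is prime; the standard existence of associated primes for nonzero modules over Noetherian rings supplies this. Once this replacement is made the ingredients $P \supseteq I$ and ``$y/1$ is a socle element of $R_P/I_P^k$'' fall out simultaneously, and the contradiction is immediate.
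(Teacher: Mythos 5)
Your proof is correct and takes essentially the same route as the paper's: the forward direction is the same one-line localization of the Ratliff condition, and in the converse you locate a prime $P\supseteq I$ at which an element of $(I^{k+1}:I)\setminus I^k$ becomes a socle element of $R_P/I_P^k$ and then apply strong persistence to reach a contradiction. The only (immaterial) difference is that the paper produces such a prime as a minimal prime of $\Supp\bigl((I^{k+1}:I)/I^k\bigr)$ and uses finite length of the localized module, whereas you take an associated prime of the cyclic module $R\bar{x}\subseteq R/I^k$.
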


\begin{proof}
Let $I^{k+1}\:I=I^k$ for all $k\geq 1$. Then $I_P^{k+1}\:I_P=I_P^k$ for all $k\geq 1$.  Thus, replacing  $R$ by $R_P$ and $I$ by $I_P$,  it is enough to show that  $I$ satisfies the strong persistence property with respect to  $\mm$. So suppose that $\mm\in \Ass(I^k)$ for some $k$ and $f\in(I^k\:\mm)\setminus I^k$. By contrary assume that $fg\in I^{k+1}$ for all $g\in I$. Then  $f\in I^k$  because $I^{k+1}\:I=I^k$, a contradiction.

Conversely, assume that  $I$ satisfies the strong persistence property, but $I^{k+1}\:I \neq  I^k$ for some $k\geq 1$. Since the $R$-module $(I^{k+1}\:I)/I^k$ is nonzero, we can choose a minimal prime ideal  $P$ in the support of $(I^{k+1}\:I)/I^k$. Then the $R_P$-module $(I_P^{k+1}\:I_P)/I_P^k$ is nonzero and of finite length. Thus there exists $f\in (I_P^{k+1}:I_P)\setminus I^k_P$ with $\mm_P f\in I^k_P$. Since $I$ satisfies the strong persistence,  there exists $g\in I_P$ such that $fg\not\in I^{k+1}_P$, contradicting the fact that  $f\in I_P^{k+1}:I_P$.
\end{proof}

Ratliff in his paper \cite{R} showed that  $I^{k+1}:I=I^k$ for  $k\gg 0$, and that  $I^{k+1}:I=I^k$  for all $k$ if $I$ is a normal ideal, in other words, if the Rees ring $\mathcal{R}(I)=\Dirsum_{k\geq 0}I^k$ is normal. More recently, Mart\'{i}nez-Bernal, Morey and Villarreal \cite{MMV} showed that for the edge ideal $I$ of any finite simple graph  one has $I^{k+1}:I=I^k$ for all $k$, and they used this fact to show that edge ideals of graphs satisfy the persistence property. Their result inspired us to formulate Theorem~\ref{firstcharacterization}.

The next result provides a generalization of the above quoted result of Ratliff regarding  normal ideals.

\begin{Theorem}
\label{stwo}
Let $R$ be a Noetherian  ring and $I\subset R$  a proper ideal  of $R$ with $\grade(I)>0$.  Suppose  that $R_P/\mm_P$ is an infinite field for all $P\in V(I)$, and that $\mathcal{R}(I)$ satisfies Serre's condition $S_2$. Then $I$ satisfies the strong persistence property. In particular, $I^{k+1}:I=I^k$ for all $k$.
\end{Theorem}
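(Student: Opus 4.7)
By Theorem~\ref{firstcharacterization}, strong persistence is equivalent to the Ratliff condition $I^{k+1}:I=I^k$ for all $k$. We shall in fact show the stronger statement that each power coincides with its Ratliff--Rush closure $\widetilde{I^n}:=\bigcup_{k\geq 1}(I^{n+k}:I^k)$, from which the Ratliff condition follows via the containments $I^n\subseteq I^{n+1}:I\subseteq \widetilde{I^n}$. Writing $A=\mathcal{R}(I)$, the task becomes proving that the graded overring $A':=\bigoplus_{n\geq 0}\widetilde{I^n}\,t^n\subseteq R[t]$ coincides with $A$.

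Pick a non-zero divisor $a\in I$; a graded-components check shows $a$ is also a non-zero divisor on $A$, hence invertible in the total quotient ring $K(A)$. Consequently $t=(at)/a\in K(A)$, so $R[t]\subseteq K(A)$ and in particular $A'\subseteq K(A)$. The key tool is the standard $S_2$-criterion: since $A$ is $S_2$, any ideal $J\subseteq A$ of height $\geq 2$ has $\grade(J,A)\geq 2$ (every minimal prime of $J$ has height $\geq 2$ and thus depth $\geq 2$), forcing the natural map $A\to \Hom_A(J,A)$ to be an isomorphism. Therefore, any $y\in K(A)$ whose conductor $(A:_A y)$ has height $\geq 2$ must lie in $A$: the map $s\mapsto sy$ is realised by some $g\in A$, and cancellation by a non-zero divisor in $(A:_A y)$ then gives $y=g$.

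It remains to show, for each homogeneous $y=rt^d\in A'$ and each prime $\mathfrak{p}\subset A$ with $\height \mathfrak{p}\leq 1$, that $(A:_A y)\not\subseteq \mathfrak{p}$. Fix $k_0\geq 1$ with $rI^{k_0}\subseteq I^{d+k_0}$, which exists by $r\in \widetilde{I^d}$. If $A_+\not\subseteq \mathfrak{p}$, choose $bt^m\in A_+\setminus \mathfrak{p}$ with $b\in I^m$, and $N\geq 1$ such that $Nm\geq k_0$; then $y(bt^m)^N=rb^N t^{d+Nm}\in I^{d+Nm}\,t^{d+Nm}\subseteq A$, so $(bt^m)^N\in (A:_A y)\setminus \mathfrak{p}$. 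Otherwise $A_+\subseteq \mathfrak{p}$; since $at\in A_+$ is a non-zero divisor, $\height \mathfrak{p}=1$ and $\mathfrak{p}$ must be a minimal prime of $A_+$. Via $A/A_+\cong R$, this forces $\mathfrak{q}:=\mathfrak{p}\cap R$ to be a minimal (hence associated) prime of $R$, so $\grade(I)>0$ gives $I\not\subseteq \mathfrak{q}$. Picking $x\in I\setminus \mathfrak{q}$ and setting $s:=x^{k_0}$, we have $s\in R\setminus \mathfrak{q}\subseteq A\setminus \mathfrak{p}$, and $sy=rx^{k_0}\,t^d\in I^{d+k_0}\,t^d\subseteq A$, so $s\in (A:_A y)\setminus \mathfrak{p}$.

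The main obstacle is the $S_2$-membership criterion, namely that in an $S_2$ Noetherian ring every ideal of height $\geq 2$ has grade $\geq 2$ and hence satisfies $A\cong \Hom_A(J,A)$; everything else reduces to direct bookkeeping within the graded subrings $A\subseteq A'\subseteq R[t]\subseteq K(A)$. The infinite residue field hypothesis does not seem to be needed for the argument above, although it may permit alternative proofs that proceed via prime avoidance to locate a superficial element whose initial form is a non-zero divisor on $\gr_I(R)$.
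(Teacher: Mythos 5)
Your argument is correct, but it takes a genuinely different route from the paper's. The paper localizes at each $P\in V(I)$ (where $S_2$ gives $\depth \Rees(I_P)\geq 2$), identifies the socle module $\mathcal{N}(I)=\bigoplus_k (I^k:\mm)/I^k$ with the Koszul homology $H_{n-1}(x_1,\ldots,x_n;\Rees(I))$ to see that it is a finitely generated module over the fiber cone, chases depths through $0\to \Rees(I)\to \Rees(I):\mm\to \mathcal{N}(I)\to 0$ to get $\depth \mathcal{N}(I)>0$, and then uses the infinite residue field (graded prime avoidance) to produce a single linear form $g\in I$ whose multiplication $(I^k:\mm)/I^k\to (I^{k+1}:\mm)/I^{k+1}$ is injective for all $k$; this verifies strong persistence directly and also yields the extra information of Remark~\ref{more}. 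You instead prove the Ratliff condition globally --- in the equivalent form of Ratliff--Rush closedness $\widetilde{I^n}=I^n$ --- by a conductor/reflexivity argument on $A=\Rees(I)$: once the conductor $(A:_A y)$ is shown to avoid every prime of height at most one, the $S_2$ hypothesis gives $\grade\bigl((A:_A y),A\bigr)\geq 2$, hence $\Hom_A\bigl((A:_A y),A\bigr)=A$ and $y\in A$. Your case analysis is sound (the case $A_+\subseteq \pp$ correctly routes through the minimal primes of $R$ and uses $\grade(I)>0$ there), and invoking Theorem~\ref{firstcharacterization} to convert the Ratliff condition back into strong persistence is legitimate, since that theorem is proved earlier and independently. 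Two small comments: in justifying $\grade\geq 2$ you should appeal to \emph{all} primes in $V\bigl((A:_A y)\bigr)$, since $\grade(J,A)=\min\{\depth A_{\qq}: \qq\in V(J)\}$, not only to the minimal ones --- harmless here, as every prime containing the conductor has height at least $2$; and, as you note, your proof does not use the infinite-residue-field hypothesis, so it establishes the stated conclusions under weaker assumptions, at the price of not producing the uniform multiplier $g$ recorded in Remark~\ref{more}.
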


\begin{proof}
Let $P\in V(I)$ and set $Q=P\dirsum \Dirsum_{k\geq 1}I^k$. Then $\Rees(I)/Q\iso R/P$, and hence $Q\in \Spec(\Rees(I))$. Moreover, $\Rees(I)_Q\iso \Rees(I_P)$. It follows that $\dim \Rees(I)_Q=\dim R_P+1\geq 2$, since $\grade(I)>0$. The $S_2$-condition then yields that $\depth \Rees(I_P)\geq 2$. Thus after localization we may  assume that $R$ is local with maximal ideal $\mm$ and that $\depth \Rees(I)\geq 2$, and we have to show that $I$ satisfies the strong persistence property with respect to $\mm$. (The assumptions regarding $R$ and $I$ are preserved upon localization.)
We set $\mathcal{N}(I)=  \Dirsum_{k\geq 0}(I^k:\mm)/I^k$. Obviously,  $\mathcal{N}(I)$ is a graded $\bar{\Rees}(I)=\Rees(I)/\mm\Rees(I)$-module. We may identify $\mathcal{N}(I)$ with the graded  Koszul homology $\bar{\Rees}(I)$-module $H_{n-1}(x_1,\ldots,x_n;\Rees(I))$. Here $x_1,\ldots,x_n$  is a minimal system of generators of $\mm$. The  $k$th graded component  of $H_{n-1}(x_1,\ldots,x_n;\Rees(I))$ is given as
\begin{eqnarray*}
H_{n-1}(x_1,\ldots,x_n;\Rees(I))_k&=&H_{n-1}(x_1,\ldots,x_n;I^k)\iso H_{n}(x_1,\ldots,x_n;R/I^k)\\
&\iso& (I^k:\mm)/I^k=\mathcal{N}(I)_k.
\end{eqnarray*}

It follows that  $\mathcal{N}(I)$ is a finitely generated  $\bar{\Rees}(I)$-module.

We set $\mathcal{M}(I)=\Rees(I):\mm=\Dirsum_{k\geq 0}I^k:\mm$, and obtain the following exact sequence
\[
0\To \Rees(I)\To \mathcal{M}(I)\To \mathcal{N}(I)\To 0
\]
of $\Rees(I)$-modules. Since $\mathcal{N} (I)$ is finitely generated $\Rees(I)$-module, it follows from this exact sequence that  $\mathcal{M}(I)$ is a finitely generated $\mathcal{R}(I)$-module as well.

Since $\depth R>0$, there exists a non-zerodivisor $f\in \mm$ of $R$.  Obviously, $f$ is also a non-zerodivisor on $\mathcal{M}(I))$, so that   $\depth \mathcal{M}(I)>0$. Hence since $\depth \Rees(I)\geq 2$, we conclude that $\depth \mathcal{N}(I)>0$. Now we use the fact that  $\bar{\Rees}(I)$ is a standard graded $K$-algebra, where $K$ is the residue class field of $R$,  and that $K$ is an infinite field. Therefore there exists a homogeneous element $g+\mm I$ of  degree $1$ in $\bar{\Rees}(I)$ with $g\in I$,  which is regular on $\mathcal{N}(I)$. Thus the multiplication map $(I^k\:\mm)/I^k\to  (I^{k+1}\:\mm)/I^{k+1}$ induced by $g$ is an injective map for all $k\geq 0$. As a result, if $f\in (I^k\:\mm)\setminus I^k$ for some $k$, then $gf\not \in I^{k+1}$, as desired.
\end{proof}

\begin{Remark}
\label{more}{\em
Under the assumptions of Theorem~\ref{stwo} one obtains more than just the strong persistence. Indeed, the proof shows that there exists $g\in I\setminus \mm I$ such that for all integers  $k, \ell \geq 0$  and all $f\in (I^k:\mm)\setminus I^k$ we have  $fg^\ell\not  \in I^{k+\ell}$. The proof also shows that under the assumptions of the theorem $\dim_K((I^k:\mm)/I^k)\leq \dim_K((I^{k+1}:\mm)/I^{k+1})$ for all $k$. }
\end{Remark}

An immediate consequence of Theorem~\ref{stwo} is the following result whose statement concerning normality  is due to Ratliff \cite{R}. We denote by $\ell(J)$ the analytic spread of an ideal, that is, the Krull dimension of $\Rees(J)/\mm \Rees(J)$.

\begin{Corollary}\label{normalCM}
Let $R$ and $I$ be as in Theorem~\ref{stwo}. Suppose  that $\mathcal{R}(I)$ is normal or  Cohen--Macaulay. Then $I$ satisfies the strong persistence property. Moreover, $P\in \Ass^\infty(I)$ if and only if $\ell(I_P)=\dim R_P$.
\end{Corollary}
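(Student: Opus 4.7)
Both hypotheses imply $\mathcal{R}(I)$ satisfies Serre's condition $S_2$: normality equals $R_1+S_2$ by Serre's criterion, and Cohen--Macaulay is strictly stronger than $S_2$. Hence Theorem~\ref{stwo} applies and yields the strong persistence property, so in particular $\Ass^\infty(I)=\bigcup_{k\geq 1}\Ass(I^k)$. This settles the first assertion.

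For the ``moreover'' part, localization at $P$ preserves normality, Cohen--Macaulayness, positivity of $\grade I$, and the infinite residue field hypothesis. It therefore suffices to treat the local case: for $(R,\mm)$ local of dimension $n$, show that $\mm\in\Ass^\infty(I)$ iff $\ell(I)=n$. By strong persistence this is equivalent to showing $\mathcal{N}(I)\neq 0\iff\ell(I)=n$, where $\mathcal{N}(I)=\bigoplus_{k\geq 0}(I^k\:\mm)/I^k$. Using the identification $\mathcal{N}(I)\cong H_{n-1}(x_1,\ldots,x_n;\mathcal{R}(I))$ extracted from the proof of Theorem~\ref{stwo}, together with the vanishing $H_n(\mathbf{x};\mathcal{R}(I))=(0\:_{\mathcal{R}(I)}\mm\mathcal{R}(I))=0$ (immediate from $\grade I>0$, since any regular element of $R$ in $\mm$ is also regular on $\mathcal{R}(I)\subset R[t]$), the standard Koszul--grade dictionary converts the condition $\mathcal{N}(I)\neq 0$ into $\grade(\mm\mathcal{R}(I),\mathcal{R}(I))=1$.

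The crux is therefore this grade computation. The regular element furnished by $\grade I>0$ gives $\grade\geq 1$, while the $S_2$-property supplies $\grade\geq\min(2,\height\,\mm\mathcal{R}(I))$, and one always has $\grade\leq\height$. Combining these with the dimension formula $\height\,\mm\mathcal{R}(I)=\dim\mathcal{R}(I)-\dim\bar{\mathcal{R}}(I)=(n+1)-\ell(I)$, one obtains the dichotomy: if $\ell(I)\leq n-1$ then $\height\,\mm\mathcal{R}(I)\geq 2$ forces $\grade\geq 2$ and hence $\mathcal{N}(I)=0$; if $\ell(I)=n$ then $\height\,\mm\mathcal{R}(I)=1$ forces $\grade=1$ and hence $\mathcal{N}(I)\neq 0$. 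The main technical obstacle is the height formula: it is immediate in the Cohen--Macaulay case from catenarity of $\mathcal{R}(I)$, but in the normal case it requires catenary hypotheses on $R$ (standard in all applications), after which the statement specializes to Ratliff's original theorem for normal ideals.
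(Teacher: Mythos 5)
Your proof is correct and follows essentially the same route as the paper: reduce to the local case, translate $\mm\in\Ass^\infty(I)$ into non-vanishing of the socle module $\Rees(I):\mm$ modulo $\Rees(I)$, convert that into the statement $\grade \mm\Rees(I)=1$, and compare grade with $\height \mm\Rees(I)=\dim \Rees(I)-\dim\bar{\Rees}(I)=(n+1)-\ell(I)$. The only differences are cosmetic: you detect $\grade=1$ via Koszul homology where the paper uses the exact sequence ending in $\Ext^1(\bar{\Rees}(I),\Rees(I))$, and you run the grade--height comparison uniformly through the $S_2$ condition (correctly flagging the catenarity needed for the height formula), whereas the paper argues only under the Cohen--Macaulay hypothesis and attributes the normal case to Ratliff.
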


\begin{proof} The statement about strong persistence follows from Theorem~\ref{stwo}. Let $P\in V(I)$. Replacing $I$ by $I_P$ and $R$ by $R_P$ we may assume that $P=\mm$ and have to show that $\mm\in \Ass^\infty(I)$ if and only if   $\ell(I)=\dim R$.

We observe that $\mm\in \Ass^\infty(I)$ if and only if $\Rees(I):\mm\neq \Rees(I)$. Note that $\Rees(I):\mm=\Rees(I):\mm\Rees(I)$. Since $\Rees(I)$ is Cohen--Macaulay, we have $\grade \mm\Rees(I)=\height \mm\Rees(I)$, and $\height \mm\Rees(I)=\dim \Rees(I) -\dim \bar{\Rees}(I)\geq \dim \Rees(I)-\dim \gr_I(R)=1$. On the other hand, the exact sequence
\[
0\to \mm\Rees(I)\to \Rees(I)\to \bar{\Rees}(I)\to 0
\]
induces the exact sequence
\[
0\to  \Rees(I) \to  \Rees(I):\mm\Rees(I)\to \Ext^1(\bar{\Rees}(I),\Rees(I))\to 0.
\]
This shows that $\Rees(I):\mm\neq \Rees(I)$ if and only if $\grade \mm\Rees(I)=1$ which is equivalent to saying that  $\dim \bar{\Rees}(I)=\dim \Rees(I)-1$. Since $\dim R=\dim \Rees(I)-1$, the assertion follows.
\end{proof}

\section{Strong persistence for monomial ideals}

In this section we discuss the concepts of the previous section for monomial ideals. Let $I \subset S= K[x_1, \ldots, x_n]$ be a monomial ideal. Recall that the associated prime ideals of $I$ are all monomial prime ideals, that is, prime ideals which are generated by variables, see for example, \cite{BHbook}, \cite{HHBook}. We denote by $V^*(I)$ the set of monomial prime ideals containing $I$. Thus $\Ass(I) \subset V^*(I)$.

Let $P=(x_{i_1},\ldots, x_{i_r})$ be a monomial prime ideal. The monomial localization of $I$ with respect to $P$, denoted by $I(P)$, is the ideal in the polynomial ring $S(P)=K[x_{i_1},\ldots,x_{i_r}]$ which is obtained from $I$ by applying the $K$-algebra homomorphism $S\rightarrow S(P)$ with $x_j\mapsto 1$ for all $x_j\not\in\{x_{i_1},\ldots,x_{i_r}\}$. For the further discussion, the following fact, shown in \cite[Lemma 1.3]{HRV} and \cite[Lemma 2.11]{FHV}, is of crucial importance: $P \in \Ass(I)$ if and only if $\depth S(P) / I(P) =0$, and this is the case if and only if $\mm_P \in \Ass(I(P))$, where $\mm_P$ is the graded maximal ideal of $S(P)$. It follows that $P \in \Ass^{\infty} (I)$ if and only if $\mm_P \in \Ass^{\infty}(I(P))$.

\medskip
 Since $\bigcup_{k \geq 1} \Ass(I^k) \subset V^*(I)$ and since for all $k$ and all $P \in V^*(I)$ the $K$-vector space $(I(P)^k : \mm_p )/ I(P)^k$ is generated by monomials, one obtains:

\begin{Lemma}
Let $I$ be a monomial ideal. Then $I$ satisfies the strong persistence property if and only if for all $P \in V^*(I)$ and $k$, and all monomials $u \in (I(P)^k:\mm_P) \setminus I(P)^k$  there exists a monomial $v\in I(P)$  such that  $uv \not \in I(P)^{k+1}$.
\end{Lemma}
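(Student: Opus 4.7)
My plan is to chain together two reductions indicated by the observations in the lead-in to the lemma.

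First, I would reduce to monomial primes. If $P \in V(I) \setminus V^*(I)$, then $P \notin \Ass(I^k)$ for every $k$ (since $\bigcup_{k\geq 1}\Ass(I^k) \subset V^*(I)$), which forces $\mm_P \notin \Ass(I_P^k)$ and hence $(I_P^k : \mm_P) = I_P^k$. The strong persistence condition at such $P$ is therefore vacuous, and it suffices to verify it at primes $P \in V^*(I)$.

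Second, I would bridge the ordinary localization $S_P$ (used in Definition \ref{strong}) with the monomial localization $S(P)$ (used in the lemma statement). For $P = (x_{i_1}, \ldots, x_{i_r}) \in V^*(I)$, the variables $x_j$ with $j \notin P$ are units in $S_P$, so $I_P^j = I(P)^j S_P$ and any monomial $w \in S(P)$ lies in $I_P^j$ if and only if it lies in $I(P)^j$. The ring extension $S(P) \to S(P)_{\mm_P} \to S_P$ is flat, with the second arrow a faithfully flat scalar extension along $K \hookrightarrow \kappa(P) = K(x_j : j \notin P)$, so colons by the finitely generated ideal $\mm_P$ extend cleanly: $(I(P)^k : \mm_P) S_P = (I_P^k : \mm_P)$. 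Since $(I(P)^k : \mm_P)/I(P)^k$ is annihilated by $\mm_P$, tensoring with $S_P$ is the same as tensoring with $\kappa(P)$, and the $K$-basis of monomials in $(I(P)^k : \mm_P) \setminus I(P)^k$ (the second observation in the setup) becomes a $\kappa(P)$-basis of $(I_P^k : \mm_P)/I_P^k$.

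With this dictionary in hand, both implications fall out quickly. Forward: a monomial $u \in (I(P)^k : \mm_P) \setminus I(P)^k$ lies in $(I_P^k : \mm_P) \setminus I_P^k$; strong persistence yields $g \in I_P$ with $ug \notin I_P^{k+1}$; after clearing denominators to assume $g \in I$ and expanding $g = \sum c_\beta w_\beta$ with monomials $w_\beta \in I$, some summand must satisfy $uw_\beta \notin I_P^{k+1}$, whence $v := w_\beta(P) \in I(P)$ fulfills $uv \notin I(P)^{k+1}$. Reverse: given $f \in (I_P^k : \mm_P) \setminus I_P^k$, decompose its class in the monomial basis as $\sum \lambda_u u$ with $\lambda_u \in \kappa(P)$ and pick $u_0$ with $\lambda_{u_0} \neq 0$; the hypothesis supplies a monomial $v \in I(P)$ with $u_0 v \notin I(P)^{k+1}$, and since the distinct monomials $uv$ remain $\kappa(P)$-linearly independent modulo $I_P^{k+1}$ by faithful flatness, $fv \equiv \sum \lambda_u (uv) \pmod{I_P^{k+1}}$ is nonzero, giving $fv \notin I_P^{k+1}$. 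The main obstacle is the second step, rigorously transferring between $S_P$ and $S(P)$; once the faithfully flat base change identifies the socle modules up to a residue-field extension, both implications reduce to routine bookkeeping with the monomial basis.
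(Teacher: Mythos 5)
Your proof is correct and fills in exactly the two observations the paper uses as its (essentially one-line) justification: the condition is vacuous at non-monomial primes because every associated prime of a power of a monomial ideal is monomial, and the socle quotients $(I(P)^k:\mm_P)/I(P)^k$ have monomial $K$-bases that persist under the faithfully flat base change $S(P)_{\mm_P}\to S_P$, so both directions reduce to bookkeeping with that basis. The only point to phrase a bit more carefully is the final independence claim: $S_P/I_P^{k+1}$ is not a $\kappa(P)$-vector space, but the element $\sum_u\lambda_u(uv)$ lies in $(I_P^{k+1}:\mm_P)/I_P^{k+1}$, which is one, and there the monomials $uv$ not belonging to $I(P)^{k+1}$ are indeed part of a $\kappa(P)$-basis, so the argument goes through as you intend.
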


Let $I$ be the Stanley-Reisner ideal that corresponds to the natural triangulation of the projective plane. Then
\[
I = (x_1x_2x_3, x_1x_2x_4, x_1x_3x_5, x_1x_4x_6, x_1x_5x_6, x_2x_3x_6, x_2x_4x_5, x_2x_5x_6, x_3x_4x_5,
x_3x_4x_6).
\]
It was observed in \cite[Example 2.18]{MMV} that $I$ satisfies the persistence property, while on the other hand $I^3:I\neq I^2$, as can be easily checked. Hence by Theorem~\ref{firstcharacterization} the ideal $I$ does not satisfy the strong persistence theorem. Indeed, $(I^2:\mm)/I^2$ is generated by the residue class of $u=x_1x_2x_3x_4x_5x_6$, and $uI^2\subset I^3$.

\medskip
As mentioned in the previous section it is known by \cite{MMV} that all edge ideals $I(G)$ satisfy the Ratliff condition, namely that $I(G)^{k+1}:I(G)=I(G)^k$ for all $k$, and hence they all satisfy the strong persistence property. Next we present another large class of monomial ideal satisfying the strong persistence property. To this end we first recall the notion of polymatroidal ideals. For the algebraic theory of polymatroids we refer to \cite{HHBook} and \cite{HH2}.

\medskip
The set of bases of a polymatroid of rank $d$ based on $[n]$ is a set $\mathcal{B}\subset \ZZ^n$ of integer vectors $\ab=(a(1),\ldots,a(n))$ with non-negative entries satisfying the following conditions:
\begin{enumerate}
\item[(i)] $|\ab|=\sum_{i=1}^na(i)=d$ for all $\ab\in\mathcal{B}$;
\item[(ii)] (Exchange property) For all $\ab,\bb\in \mathcal{B}$ for which  $a(i)>b(i)$ for some $i$, there exists $j\in [n]$ such that $b(j)>a(j)$ and $\ab-\epsilon_i+\epsilon_j\in \mathcal{B}$. Here $\epsilon_i$ denotes the canonical $i$th unit vector.
\end{enumerate}

If the bases of the polymatroid are all $(0,1)$-vectors, then they form the bases of a matroid. In the following example we give some interesting classes of matroids and polymatroids.

\medskip\noindent
\begin{Examples}\label{examplepolymatroid}{\em

(1) {\em Graphic Matroids:} Let $G$ be a graph with edge set $E(G)=[n]$. The set $\mathcal{B} \subset \ZZ^n$ of bases of the graphic matroid $G$ consists of all vectors $\ab_T = \sum_{i \in E(T)} \epsilon_i$, where $T$ is a spanning forest of $G$.

(2) {\em Transversal polymatroids and matroids:} Given a collection $\mathcal{A} = \{A_1, \ldots, A_r\}$ of subsets of $[n]$, the set of bases $\mathcal{B}$ of the transversal polymatroid attached to $\mathcal{A}$ is

\[
\mathcal{B} = \{\epsilon_{i_1} + \ldots + \epsilon_{i_d} \; : \; i_k \in A_k, 1 \leq k \leq d  \}
\]

If in addition  $i_{p} \neq i_{q}$ for all $1\leq p,q \leq d$ with $p \neq q$, then $\mathcal{B}$ is the set of bases of a transversal matroid.

(3) {\em  Polymatroids of Veronese type:} Given an integer $d$  and a vector $\cb=(c(1), \ldots, c(n))$ with $c(i) \geq 0$. Then the bases of the polymatroid of Veronese type $(d,c)$ is given by all integers vector $\ab=(a(1), \ldots, a(n))$ with $|\ab| =d$ such that $0 \leq a(i) \leq c(i)$ .
}
\end{Examples}

\begin{Definition}
{\em A monomial ideal $I\subset S=K[x_1,\ldots,x_n]$ is called a {\em polymatroidal ideal}, if there exists a set of bases $\B\subset \ZZ^n$ of a polymatroid, such that
$$G(I)=\{\xb^\ab\:\; \ab\in\B\}.$$
Here we denote by $G(I)$ the unique minimal set of generators of a monomial ideal $I$.}
\end{Definition}

Thus a polymatroidal ideal is a monomial ideal, generated in a single degree, satisfying the following condition: for all monomials $u,v\in G(I)$ with $\deg_{x_i}(u)>\deg_{x_i}(v)$, there exists $j\in[n]$ such that $\deg_{x_j}(v)>\deg_{x_j}(u)$ and $x_i(u/x_j)\in G(I)$. Here $\deg_{x_k} (w) = a_k$ if $w = \prod _{k=1}^n x_k^{a_k}$.

\medskip
\noindent
\begin{Proposition}
\label{strongpolymatroidal}
Let $I$ be a polymatroidal ideal. Then $I$ satisfies the strong persistence property.
\end{Proposition}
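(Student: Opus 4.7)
The plan is to combine the monomial-localization reformulation established in the lemma just above with Proposition~\ref{linear-powers}. By that lemma, showing strong persistence for $I$ reduces to verifying, for every monomial prime $P\in V^*(I)$, that the monomial localization $I(P)\subset S(P)$ satisfies the strong persistence property with respect to the graded maximal ideal $\mm_P$ of $S(P)$. So my task is to equip each $I(P)$ with the hypothesis needed for Proposition~\ref{linear-powers}.

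To do so, I would invoke two standard properties of polymatroidal ideals (both documented in \cite{HHBook} and \cite{HH2}): the monomial localization of a polymatroidal ideal is again polymatroidal, and the product of two polymatroidal ideals is polymatroidal. Since every polymatroidal ideal is known to have a linear resolution, the second fact implies in particular that every power $I(P)^k$ is polymatroidal and hence has a linear resolution. Proposition~\ref{linear-powers} therefore applies to $I(P)$ and yields the strong persistence property with respect to $\mm_P$. As $P\in V^*(I)$ was arbitrary, the reduction lemma delivers strong persistence for $I$ itself.

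No step of this argument is technically deep; the whole proof is a matter of lining up the right external inputs. The main conceptual point is that the graded-maximal-ideal version of strong persistence supplied by Proposition~\ref{linear-powers} meshes precisely with what the monomial localization lemma requires, once one knows that both the polymatroidal structure and the linear-resolution property of powers are preserved under monomial localization. If a more self-contained argument were desired, one could alternatively attempt to verify the Ratliff condition $I^{k+1}\colon I = I^k$ directly using the exchange property of the underlying polymatroid, but this would be substantially more combinatorial and appears unnecessary given the machinery already assembled in Section~1.
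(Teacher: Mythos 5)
Your proposal is correct and follows essentially the same route as the paper: reduce via the monomial localization lemma to checking strong persistence of each $I(P)$ with respect to $\mm_P$, note that $I(P)$ is polymatroidal, that powers of polymatroidal ideals are polymatroidal and hence have linear resolutions, and conclude by Proposition~\ref{linear-powers}. The paper cites \cite[Corollary 2.2]{HRV} for localization and \cite[Theorems 12.6.2, 12.6.3]{HHBook} for the linear resolution and product facts, exactly the external inputs you identified.
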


\begin{proof}
It is known by  \cite[Corollary 2.2]{HRV} that $I(P)$ is again a polymatroidal ideal for all $P\in V^*(I)$. Since powers of polymatroidal ideals are again polymatroidal, see \cite[Theorem 12.6.3]{HHBook}, and since by \cite[Theorem 12.6.2]{HHBook} polymatroidal ideals have linear resolutions, we conclude that all powers of $I(P)$ have a linear resolution. Thus the assertion follows from Proposition~\ref{linear-powers}.
\end{proof}

For the proof of this proposition it was essential to know that not only $I$ but also all monomial localizations of $I$ have a linear resolution. It is conjectured in \cite{BH} that the only monomial ideals with this property are exactly the polymatroidal ideals.

\section{The linear relation graph of a monomial ideal}

In this section we introduce the linear relation graph $\Gamma$ of a monomial ideal $I\subset S=K[x_1,\ldots,x_n]$. This graph will allow us to give upper bounds of the depth of the  powers $I^k$ for $k$ less than a certain number which is determined by $\Gamma$.

\begin{Definition}
{\em Let $G(I)=\{u_1,\ldots,u_m\}$. The {\em linear relation graph} $\Gamma$ of $I$ is the graph with edge set
\[
E(\Gamma)=\{\{i,j\}\: \text{there exist $u_k,u_l\in G(I)$ such that $x_iu_k=x_ju_l$}\}
\]
and vertex set $V(\Gamma)=\Union_{\{i,j\}\in E(\Gamma)}\{i,j\}$.
}
\end{Definition}

\begin{Example}
\label{edgeideals}
{\em
Let $I_G$ be the edge ideal of the finite simple graph $G$ on the vertex set $[n]$. Then the linear relation graph $\Gamma$ of $I_G$ has edge set
\[
\{ \{i,j\} \; : \text{$i, j \in V(G)$ and $i$ and $j$ have a common neighbor in $G$}\}.
\]
In particular, let $G$ be  an odd cycle with edges $\{i, i+1\}$ for $i=1, \ldots n$. For simplicity of notation, here and in the following,  any  $i$ exceeding $n$ stands the remainder of $i$ modulo $n$. With the notation introduced,  $\Gamma$ is an odd cycle with  edges $\{i,i+2\}$ for $i=1, \ldots, n$.

On the other hand, if $G$ is an even cycle, then $\Gamma$ is the graph with two connected components $\Gamma_1$ and $\Gamma_2$  where $\Gamma_1$ is a cycle with $$E(\Gamma_1) =\{\{2i,2i+2\}\; : i= 1, \ldots, n/2 \}$$ and $\Gamma_2$ is a cycle with $$E(\Gamma_2) =\{\{2i-1,2i+2\}\; : i= 1, \ldots, n/2 \}.$$

}
\end{Example}

\begin{Theorem}
\label{depthbounds}
Let $I\subset S=K[x_1,\ldots,x_n]$ be a monomial ideal generated in a single degree whose linear relation graph $\Gamma$ has $r$ vertices and $s$ connected components. Then
\[
\depth S/I^t\leq n-t-1 \quad \text{for $t=1,\ldots, r-s$}.
\]
\end{Theorem}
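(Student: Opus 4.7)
The plan is to produce, for each $t \in \{1,\ldots,r-s\}$, an associated prime of $I^t$ of height at least $t+1$; since $\depth S/I^t \le \dim S/P$ for any $P \in \Ass(I^t)$, this gives the bound. Fix a spanning forest $T$ of $\Gamma$ (so $|E(T)| = r-s$) and pick $t$ of its edges to form a subforest $F$ with $t$ edges and $t+c \ge t+1$ vertices, where $c$ is the number of components of $F$. Set $P = (x_v : v \in V(F))$, so $\height P = t+c \ge t+1$. It suffices to show $P \in \Ass(I^t)$, and by the monomial criterion recalled in Section~2 this amounts to exhibiting a monomial $M$ in the localization $S(P)$ with $M \cdot \mm_P \subset I(P)^t$ but $M \notin I(P)^t$.

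For each $e_k = \{i_k,j_k\} \in E(F)$ the defining relation of $\Gamma$ provides $u_k,v_k \in G(I)$ with $x_{i_k}u_k = x_{j_k}v_k$; set $w_k = u_k/x_{j_k} = v_k/x_{i_k}$ and let $\tilde w_k, \tilde u_k, \tilde v_k$ denote their images in $S(P)$, so that $\tilde u_k, \tilde v_k \in I(P)$. In the main case when $F$ is connected (a tree), I would take
\[
M \;=\; \prod_{k=1}^{t}\tilde w_k \cdot \prod_{v \in V(F)} x_v^{\deg_F(v)-1},
\]
whose lift to $S$ has degree $t(d-1)+(t-1) = td-1$. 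For each $v \in V(F)$, root $F$ at $v$ and for each edge $e_k$ let $s_k \in \{i_k,j_k\}$ be the endpoint closer to $v$. A parent-endpoint count in the rooted tree gives $\#\{k : s_k = u\}$ equal to $\deg_F(u)-1$ for $u \neq v$ and $\deg_F(v)$ for $u = v$, exactly matching the $x$-exponent vector of $M\cdot x_v$. Consequently
\[
M\cdot x_v \;=\; \prod_{k=1}^{t}(x_{s_k}\tilde w_k),
\]
and each factor is either $\tilde u_k$ or $\tilde v_k$, hence lies in $I(P)$, so $M\cdot x_v \in I(P)^t$. Non-membership $M \notin I(P)^t$ follows by a multidegree comparison: the lift of $M$ has degree $td-1 < td$, the minimum degree of $I^t$, and provided $V(F)$ meets the support of every minimal generator of $I$ this bound propagates to $I(P)^t$.

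The main obstacle I foresee is the disconnected case $c \ge 2$, which is forced whenever $t$ exceeds the size of the largest component of $\Gamma$: the total $x$-exponent in the ansatz above is short by $c-1$, and no single orientation simultaneously satisfies the required in-degree at every vertex of $F$. I would remedy this by multiplying $M$ by $c-1$ additional vertex factors attached to designated roots of all but one component of $F$, and by performing the orientation argument component by component, rooting the component containing $v$ at $v$ and each other component at its designated root. A secondary technicality is that the monomial localization at $P$ can reduce a generator of $I$ to a unit when $V(F)$ is disjoint from its support; that would force enlarging $P$ by auxiliary variables, which only raises $\height P$ and strengthens the depth estimate $\depth S/I^t \le n-t-1$.
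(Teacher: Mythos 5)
Your strategy is to witness the bound $\depth S/I^t\leq n-t-1$ by an associated prime of $I^t$ of height at least $t+1$, but this proves strictly more than the theorem and is false in general. Concretely, let $I=(x_1x_2,x_2x_3,x_3x_4,x_4x_1)$ be the edge ideal of the $4$-cycle. Then $\Gamma$ has edges $\{1,3\}$ and $\{2,4\}$, so $r=4$, $s=2$, $r-s=2$, and for $t=2$ your subforest $F$ is forced to be all of $\Gamma$, with $c=2$ and $P=\mm$. But the $4$-cycle is bipartite, hence its edge ideal is normally torsion-free and $\Ass(I^k)=\Min(I)=\{(x_1,x_3),(x_2,x_4)\}$ for all $k$; in particular $\mm\notin\Ass(I^2)$, and in fact \emph{no} associated prime of $I^2$ has height $\geq 3$, even though the asserted bound $\depth S/I^2\leq 1$ does hold. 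The inequality $\depth S/I^t\leq \dim S/P$ only runs in the direction you cannot use: small depth does not force the existence of a small-dimensional associated prime (that equivalence holds only for depth $0$). So in the disconnected case your approach is attempting to prove a false statement, and no choice of roots or extra vertex factors can repair it. A second, independent gap occurs already in the connected case: your non-membership argument "$\deg M = td-1 < td$" does not survive monomial localization, because $I(P)$ is in general not generated in a single degree. For instance, with $I=(x_2x_3x_4,\,x_1x_3x_4,\,x_1x_2x_4,\,x_1x_4^2,\,x_2x_4^2)$ and $F$ the path $1\hbox{--}2\hbox{--}3$, one gets $M=x_1x_2x_3$ while $\widetilde{x_1x_4^2}\cdot\widetilde{x_2x_4^2}=x_1x_2$ divides $M$, so $M\in I(P)^2$; indeed $I(P)=(x_1,x_2)$ and $P\notin\Ass(I^2)$. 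Your proposed fix of enlarging $P$ changes the localized ideal and is not accompanied by any argument that the socle condition persists.

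The paper avoids both problems by never localizing and never invoking associated primes: it exhibits a nonzero class in the Koszul homology $H_t(x_1,\ldots,x_n;I^t)$ directly, namely the wedge $z_1\wedge\cdots\wedge z_t$ of the $1$-cycles $z_k=u_{p_k}e_{j_k}-u_{q_k}e_{i_k}$ attached to the forest edges. The free-vertex labelling guarantees $z\neq 0$ (the basis element $e_{j_1}\wedge\cdots\wedge e_{j_t}$ occurs exactly once), and the single-degree hypothesis guarantees $z$ is not a boundary, since its coefficients are products of $t$ generators of $I$ and hence lie in $I^t\setminus\mm I^t$. Depth sensitivity of the Koszul complex then gives $\depth I^t\leq n-t$, i.e. $\depth S/I^t\leq n-t-1$, uniformly in the number of components. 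Your rooted-tree computation showing $\mm_P M\subset I(P)^t$ is correct and, in the special case where $\Gamma$ is connected on all of $[n]$ and $t=n-1$, your $M$ coincides exactly with the socle element $u_{p_1}\cdots u_{p_{n-1}}/x_r$ of the paper's Corollary~\ref{additional}; but that corollary is a supplement to the theorem under extra hypotheses, not a route to the theorem itself.
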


\begin{proof}
It is enough to show that $H_t(x_1, \ldots, x_n; I^t) \neq 0$ for $t=1, \ldots, r-s$. Let $T \subset \Gamma$ be a spanning forest of $\Gamma$, i.e, a subgraph of $\Gamma$ which is a forest and for which $V(T) = V(\Gamma)$. This forest has $r-s$ (distinct) edges, say, $$\{i_1,j_1\}, \{i_2,j_2\},\ldots,\{i_{r-s},j_{r-s}\}.$$ For a suitable labeling of the edges we may assume that for all $k$, $j_k$ is a free vertex of the forest with edges $\{i_1,j_1\}, \{i_2,j_2\},\ldots,\{i_{k},j_{k}\}$. In particular, we have $j_k\not\in\{i_1,\ldots,i_k,j_1,\ldots,j_{k-1}\}$.

By the definition of $\Gamma$, it follows that to each edge $\{i_k,j_k\}$ belongs to a cycle $z_k=u_{p_k}e_{j_{k}}-u_{q_k} e_{i_k}$ in $K(x_1, \ldots, x_n ; I)$ where $u_{p_k}$ and $u_{q_k}$ are suitable elements in $G(I)$. Then $z=z_1\wedge z_2\ldots \wedge z_{t}$ is a non-trivial cycle in $K_{t}(x_1,\ldots,x_n;I^{t})$. Indeed, $z\neq 0$, because in $z$ the basis element $e_{j_1}\wedge e_{j_2}\wedge \ldots \wedge e_{j_{t}}$ appears in the expansion of the wedge product only once (with coefficient $u_{p_1}, u_{p_2}\cdots u_{p_{t}}$). The cycle $z$ cannot be a boundary of $K_{t}(x_1,\ldots,x_n;I^{t})$ because its coefficients all belong to $I^{t}\setminus \mm I^{t}$, since $I$ is generated in single degree. This shows that $[z]\neq 0$ in $H_{t}(x_1,\ldots,x_n;I^{t})$, and proves the theorem.
\end{proof}

Applying Theorem~\ref{depthbounds} to  edge ideals of cycles,  it follows from the results in Example~\ref{edgeideals} that $\depth S/I_G^t\leq n-t-1$ for $t=1,\ldots, k$, where $k=n-1$ if $G$ is an odd cycle of length $n$, and $k=n-2$, if $G$ is an even cycle of length $n$.

For example, if $n=5$, one has  $\depth S/I_G=\depth S/I_G^2=2$ and $\depth S/I_G^3=\depth S/I_G^4=0$. This shows that in general the upper bound for the depth of the powers given in Theorem~\ref{depthbounds} is not strict.

\medskip
In a particular situation of Theorem~\ref{depthbounds}, we obtain the following additional information.

\begin{Corollary}\label{additional}
Let $I\subset S=K[x_1,\ldots,x_n]$ be a monomial ideal generated in single degree with linear relation graph $\Gamma$. Assume that $|V(\Gamma)| = n$ and that $\Gamma$ is connected.  Let $T \subset \Gamma$ be a spanning tree of $\Gamma$ with edges $\{i_1,j_1\}, \{i_2,j_2\},\ldots,\{i_{n-1},j_{n-1}\}$ such that for all $k$, $j_k$ is a free vertex of the tree with edges $\{i_1,j_1\}, \{i_2,j_2\},\ldots,\{i_{k},j_{k}\}$. For $k=1, \ldots, n-1$, let $z_k=u_{p_k}e_{j_{k+1}}-u_{q_k} e_{i_k}$ be the cycle corresponding to the edge $\{i_k,j_k\}$ and let $r$ be the unique element in $[n] \setminus \{j_1, \ldots, j_{n-1}\}$. Then $\depth(S/I^{n-1}) = 0 $ and $u_{p_1}\ldots u_{p_{n-1}}/x_r$ is a socle element of $S/I^{n-1}$. Moreover, $\mm \in \Ass(I^k)$ for all $k \geq n-1$.
\end{Corollary}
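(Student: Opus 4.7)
My plan begins by identifying the socle candidate explicitly. Since the vertices $j_1,\ldots,j_{n-1}$ are pairwise distinct by assumption, the unique element $r\in[n]\setminus\{j_1,\ldots,j_{n-1}\}$ necessarily lies in $\{i_1,\ldots,i_{n-1}\}$; choose $k_0$ with $i_{k_0}=r$. The edge $\{i_{k_0},j_{k_0}\}\in E(\Gamma)$ encodes the monomial relation $x_{j_{k_0}}u_{p_{k_0}}=x_{i_{k_0}}u_{q_{k_0}}=x_r u_{q_{k_0}}$, so $x_r\mid u_{p_{k_0}}$. Hence, setting $U:=u_{p_1}\cdots u_{p_{n-1}}$, the expression $f:=U/x_r$ is a well-defined monomial in $S$ of degree $(n-1)d-1$, where $d$ is the common degree of the generators of $I$.

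Next I will verify $f\in(I^{n-1}:\mm)\setminus I^{n-1}$. The non-containment is a degree count: every nonzero element of $I^{n-1}$ has degree at least $(n-1)d>\deg f$. For the colon inclusion, the case $i=r$ gives $x_r f=U\in I^{n-1}$ at once. For $i\neq r$, write $i=j_{l_0}$ for the unique such $l_0$, view $T$ as a rooted tree with root $r$ (consistent with the prescribed ordering of edges, under which each $i_k$ is the parent of the newly added leaf $j_k$), and let $l_0,l_1,\ldots,l_{m-1}$ be the indices of the edges on the unique path in $T$ from $j_{l_0}$ up to $r$, so that $i_{l_s}=j_{l_{s+1}}$ for $0\leq s\leq m-2$ and $i_{l_{m-1}}=r$. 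Applying the relations $x_{j_{l_s}}u_{p_{l_s}}=x_{i_{l_s}}u_{q_{l_s}}$ successively for $s=0,\ldots,m-1$ yields
\[
x_{j_{l_0}}U = x_r\tilde U,
\]
where $\tilde U$ is obtained from $U$ by replacing $u_{p_{l_s}}$ with $u_{q_{l_s}}$ for each $s$. This iteration is consistent because the indices $l_0,\ldots,l_{m-1}$ are pairwise distinct (a simple path uses each edge of $T$ at most once), so each substitution acts on a factor that has not yet been modified. Hence $x_i f=\tilde U$ is a product of $n-1$ generators of $I$, i.e., lies in $I^{n-1}$. This shows $f$ is a socle element of $S/I^{n-1}$, so in particular $\depth(S/I^{n-1})=0$ and $\mm\in\Ass(I^{n-1})$.

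Finally, for any $k\geq n-1$ I would set $f_k:=f\cdot u_{p_1}^{k-(n-1)}$. Then $f_k\mm=(f\mm)\,u_{p_1}^{k-(n-1)}\subset I^{n-1}\cdot I^{k-(n-1)}=I^k$, while $\deg f_k=kd-1<kd$ forces $f_k\notin I^k$, so $f_k$ is a socle element of $S/I^k$ and $\mm\in\Ass(I^k)$. The main obstacle I anticipate is the second paragraph: the iterative ``push'' of $x_{j_{l_0}}$ along the tree path and the bookkeeping of the substitutions $u_{p_{l_s}}\mapsto u_{q_{l_s}}$ must be set up carefully, although the underlying combinatorial ingredient --- that the $l_s$ are pairwise distinct --- makes the substitutions commute unproblematically.
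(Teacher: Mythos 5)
Your proof is correct, and it takes a genuinely different route from the paper's. The paper obtains the socle element homologically: from the proof of Theorem~\ref{depthbounds} the cycle $z=z_1\wedge\cdots\wedge z_{n-1}$ represents a nonzero class in $H_{n-1}(x_1,\ldots,x_n;I^{n-1})$, and the explicit isomorphism $H_n(x_1,\ldots,x_n;S/I^{n-1})\iso H_{n-1}(x_1,\ldots,x_n;I^{n-1})$, whose source is identified with $(I^{n-1}:\mm)/I^{n-1}$, is used to recognize $u_{p_1}\cdots u_{p_{n-1}}/x_r$ as a preimage of $[z]$; the socle property is thus inherited from the non-vanishing of a Koszul homology class rather than checked by hand. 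You instead verify $\mm f\subset I^{n-1}$ directly, rooting $T$ at $r$ and pushing each $x_{j_{l_0}}$ along the unique path to $r$ via the relations $x_{j_{l_s}}u_{p_{l_s}}=x_{i_{l_s}}u_{q_{l_s}}$; the non-membership $f\notin I^{n-1}$ and the passage to all $k\geq n-1$ are the same degree arguments as in the paper. Your version is more elementary and self-contained (it does not invoke Theorem~\ref{depthbounds} at all, and it makes explicit why $x_r$ divides $u_{p_1}\cdots u_{p_{n-1}}$, a point the paper leaves implicit), at the cost of some combinatorial bookkeeping. The one step you assert rather than prove is that $j_k\mapsto i_k$ really is the parent function of $T$ rooted at $r$. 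This does follow from the hypotheses, but deserves a line: the $j_k$ are pairwise distinct and exhaust $[n]\setminus\{r\}$, so $v\mapsto i_{k(v)}$ (where $j_{k(v)}=v$) is a well-defined map on $[n]\setminus\{r\}$ whose iteration cannot cycle --- a $2$-cycle would force two indices $k\neq k'$ with $\{i_k,j_k\}=\{i_{k'},j_{k'}\}$, and a longer minimal cycle would trace a closed walk of distinct edges, i.e., a cycle in the tree $T$ --- hence every orbit terminates at $r$ along the unique simple path; in particular the edge indices $l_0,\ldots,l_{m-1}$ are distinct, which is exactly what makes your successive substitutions consistent.
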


\begin{proof}
The assumptions imply that $z=u_{p_1}\ldots u_{p_{n-1}}e_{j_1}\wedge e_{j_2}\wedge \ldots \wedge e_{j_{t}}+\cdots$ is a cycle in $K_{n-1}(x_1,\ldots,x_n;I^{n-1})$ whose homology class $[z]\in H_{n-1}(x_1,\ldots,x_n;I^{n-1})$ is non-trivial and for which $e_{j_1}\wedge e_{j_2}\wedge \ldots \wedge e_{j_{t}}$ appears only once in the expansion of $z$, see the proof of Theorem~\ref{depthbounds}. Now we use the fact that
\[
H_n(x_1,\ldots,x_n;S/I^{n-1})\iso H_{n-1}(x_1,\ldots,x_n;I^{n-1}).
\]
This isomorphism is established as follows: a generator  $[(u+I^{n-1})e_1\wedge e_2\wedge \cdots \wedge e_n]\in H_n(x_1,\ldots,x_n;S/I^{n-1})$ with $u\in I^{n-1}:\mm$ is mapped to $[\sum_{i=1}(-1)^{i+1}ux_ie_1\wedge \cdots e_{i-1}\wedge e_{i+1}\wedge \cdots \wedge e_n]\in  H_{n-1}(x_1,\ldots,x_n;I^{n-1})$.

It follows that $[(u+I^{n-1})e_1\wedge e_2\wedge \cdots \wedge e_n]$ with $u=(-1)^{r+1}u_{p_1}\ldots u_{p_{n-1}}/x_r$ is mapped to our given non-trivial homology class $[z]$. This implies that $w=u_{p_1}\ldots u_{p_{n-1}}/x_r\in (I^{n-1}:\mm)\setminus I^{n-1}$, as desired.

Say, $I$ is generated in degree $d$. Then $I^{k}$ is generated in degree $dk$ for all $k$ and  $\deg w=d(n-1)-1$. Let $v$ be an arbitrary generator of $I$ (of degree $d$). Then $v^\ell w\in I^{n-1+\ell}:\mm$  for all $\ell\geq 0$ and $\deg v^\ell w=d(n-1+\ell)-1$. This shows that $v^\ell w\in (I^{n-1+\ell}:\mm)\setminus
I^{n-1+\ell}$ and implies that $\mm \in \Ass(I^k)$ for all $k \geq n-1$.
\end{proof}

We come back to Example~\ref{edgeideals} and compute a socle element of $S/I_G^{n-1}$ where $n$ is odd. $I_G=(u_1,\ldots,u_n)$ with $n$ odd, and $u_i=x_ix_{i+1}$ for $i=1,\ldots,n$. Then $\Gamma$ is a cycle on the vertex set $[n]$  and the subgraph $T\subset \Gamma$ with edges $\{2i-1,2i+1\}$ for $i=1,\ldots,n-1$ is a spanning forest of $\Gamma$. For $i=1,\ldots,n-1$ we obtain the $1$-cycles $z_i= u_{2i-1}e_{2i+1}-u_{2i+1}e_{2i-1}$. Since $1$ is different from all the numbers $2i+1$ with $i=1,\ldots,n-1$, it follows from Corollary~\ref{additional} that
\[
u=(\prod_{i=1}^{n-1}u_{2i-1})/x_1= (\prod_{i=1}^{n}u_i)/u_{n-1}x_1=x_1x_{n-1}x_n \prod_{i=2}^{n-2}x_i^2
\]
is a socle element of $S/I_G^{n-1}$.

Actually, as shown in the paper by Chen et al. \cite[Lemma 3.1]{CMS}, it is shown that for an odd cycle $G$ of length $2k+1$, one has already that $\mm \in \Ass I^k$ and that $x_1 x_2 \ldots x_{2k+1}$ is a socle element of $S/ I^k$. This example shows that our bound for the depth stability is in general not a strict bound.

\section{Stability indices for polymatroidal ideals}

Let $R$ be Noetherian ring and $I\subset R$ a proper ideal. Following \cite{HRV} and \cite{MMV} we define {\em the index of stability for the associated prime ideals of the powers  of $I$}   to be  the number
\[
\astab(I)=\min\{k\:\;  \Ass(I^{\ell})=\Ass(I^k) \text{ for  all $\ell\geq k$}\}.
\]
Furthermore,  we define the {\em index of depth stability}  of $I$, as introduced in \cite{HRV}, to be the number
\[
\dstab(I)=\min\{k\:\; \depth S/I^\ell=\depth S/I^k \text{ for  all $\ell\geq k$}\}.
\]

The main objective of this section is the proof of the following

\begin{Theorem}
\label{astabdstab}
Let $I\subset S=K[x_1,\ldots,x_n]$ be a polymatroidal ideal. Then $$\astab(I),\dstab(I)< \ell(I).$$
In particular, $\astab(I),\dstab(I)<n$.
\end{Theorem}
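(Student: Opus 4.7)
The plan is to derive both bounds by combining four ingredients from earlier in the paper: strong persistence for polymatroidal ideals (Proposition~\ref{strongpolymatroidal}), the non-increasing nature of the depth function for an ideal whose powers all have a linear resolution (\cite[Proposition~1.2]{HRV}), Lemma~\ref{needed} (which yields $\depth S/I^{\ell(I)-1}=n-\ell(I)$ for polymatroidal $I$), and the limit depth formula $\lim_{k\to\infty}\depth S/I^k=n-\ell(I)$ recalled in the introduction. I would dispose of $\dstab(I)$ first and then bootstrap to $\astab(I)$ via monomial localization.

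For $\dstab(I)<\ell(I)$: since all powers of a polymatroidal ideal have a linear resolution, the depth function $k\mapsto \depth S/I^k$ is non-increasing and bounded below by its limit $n-\ell(I)$. Lemma~\ref{needed} shows this lower bound is already attained at $k=\ell(I)-1$, so $\depth S/I^k=n-\ell(I)$ for every $k\geq \ell(I)-1$, giving $\dstab(I)\leq \ell(I)-1$.

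For $\astab(I)<\ell(I)$: strong persistence gives $\Ass(I^k)\subseteq \Ass(I^{k+1})$ for all $k$, so $\astab(I)=\max\{k_P \: P\in \Ass^\infty(I)\}$, where $k_P$ is the smallest index at which $P$ enters the chain. Fix $P\in \Ass^\infty(I)$. Then $P\in \Ass(I^k)$ for some $k\geq \ell(I)-1$, and combining the general inequality $\depth S/I^k\leq \dim(S/P)=n-\height P$ with the depth value from the previous paragraph forces $r:=\height P=|\supp P|\leq \ell(I)$. Passing to the monomial localization $I(P)\subseteq S(P)=K[x_i \: i\in \supp P]$, which is polymatroidal in $r$ variables by \cite[Corollary~2.2]{HRV}, the equivalence $P\in \Ass^\infty(I)\iff \mm_P\in \Ass^\infty(I(P))$ recorded at the start of Section~2 implies that the limit depth of $I(P)$ in $S(P)$ equals $0$. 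The limit depth formula applied to the polymatroidal ideal $I(P)$ then yields $r-\ell(I(P))=0$, i.e.\ $\ell(I(P))=r$. Now applying the $\dstab$ conclusion above to $I(P)$ in place of $I$ gives $\depth S(P)/I(P)^{r-1}=0$, equivalently $\mm_P\in \Ass(I(P)^{r-1})$, which translates back to $P\in \Ass(I^{r-1})$. Hence $k_P\leq r-1\leq \ell(I)-1$, so $\astab(I)\leq \ell(I)-1$. The final assertions $\astab(I),\dstab(I)<n$ then follow from $\ell(I)\leq n$.

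The principal obstacle is extracting the height bound $\height P\leq \ell(I)$ for each $P\in \Ass^\infty(I)$; once this is secured, the rest is a clean transfer of the $\dstab$ argument through monomial localization, relying on the fact that both polymatroidality and the limit depth formula are inherited by $I(P)$, which is what lets the $\dstab$ conclusion reappear in the smaller polynomial ring $S(P)$ and thereby pin down $k_P$.
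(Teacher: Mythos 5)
Your proof is correct and follows the same overall strategy as the paper's: first settle $\dstab(I)$ by squeezing $\depth S/I^{\ell(I)-1}$ between the upper bound $n-\ell(I)$ coming from Theorem~\ref{depthbounds} together with Lemma~\ref{needed} and the lower bound given by the limit depth $n-\ell(I)$ (using that the depth function of a polymatroidal ideal is non-increasing), and then transfer this to $\astab(I)$ through monomial localization. The one genuine divergence is in how you control $\ell(I(P))$ for $P\in\Ass^\infty(I)$. The paper invokes Corollary~\ref{normalCM} to get $\ell(I(P))=\dim S(P)$ and Lemma~\ref{ayesha} to get $\ell(I(P))\leq\ell(I)$; you obtain both from depth considerations alone: the identity $\ell(I(P))=\dim S(P)$ from the limit depth formula applied to the polymatroidal ideal $I(P)$ (since $\mm_P\in\Ass^\infty(I(P))$ forces its limit depth to be zero), and the bound $\height P\leq\ell(I)$ from the standard inequality $\depth S/I^k\leq\dim S/P$ for $P\in\Ass(I^k)$, evaluated at large $k$ where the depth equals $n-\ell(I)$. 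This makes your argument independent of Corollary~\ref{normalCM} (and hence of any normality or Cohen--Macaulayness of the Rees algebra) and of Lemma~\ref{ayesha}, at the price of leaning somewhat harder on the limit depth formula of \cite{HRV}. Two minor points of bookkeeping: the equality $\depth S/I^{\ell(I)-1}=n-\ell(I)$ requires Theorem~\ref{depthbounds} in addition to Lemma~\ref{needed} (the lemma only identifies $r-s+1$ with $\ell(I)$, while the theorem supplies the actual depth bound), and, exactly as in the paper's own proof, the degenerate case of a principal polymatroidal ideal (where $\ell(I)=1$ and both indices equal $1$) is silently excluded.
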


\medskip
We shall need the following

\begin{Lemma}
\label{needed}
Let $I$ be a monomial ideal and $\Gamma$ be the linear relation graph of $I$. Suppose $\Gamma$ has $r$ vertices and $s$ connected components. Then
\[
\ell(I) \geq r-s+1,
\]
and equality holds if $I$ is a polymatroidal ideal.
\end{Lemma}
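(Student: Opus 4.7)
The plan is to translate both sides of the desired inequality into ranks of sublattices of $\ZZ^n$ and then compare them. Write $G(I)=\{u_1,\ldots,u_m\}$ with $u_i=x^{a_i}$ for $a_i\in\ZZ_{\geq 0}^n$, and set $M=\ZZ\langle a_1,\ldots,a_m\rangle\subseteq\ZZ^n$. Because $I$ is generated in a single degree $d$, the fiber ring $\Rees(I)/\mm\Rees(I)$ is isomorphic to the affine semigroup algebra $K[u_1,\ldots,u_m]\subseteq S$ (the defining binomial relations coincide, since the degree-$dk$ part of $\mm I^k$ vanishes), and hence $\ell(I)=\dim K[u_1,\ldots,u_m]=\rank M$. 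On the graph side, let $\epsilon_i$ denote the $i$-th standard basis vector of $\ZZ^n$ and put
\[
L=\ZZ\langle\, \epsilon_i-\epsilon_j \: \{i,j\}\in E(\Gamma)\,\rangle\subseteq\ZZ^n;
\]
a standard incidence-matrix computation for $\Gamma$ gives $\rank L=r-s$.

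The lower bound is then almost immediate. By definition of $\Gamma$, each edge $\{i,j\}\in E(\Gamma)$ encodes a relation $x_iu_k=x_ju_l$, whence $\epsilon_i-\epsilon_j=a_l-a_k\in M$ and $L\subseteq M$. Every vector in $L$ has coordinate sum $0$, while $|a_1|=d\geq 1$, so $a_1\notin L$, and in particular $\ZZ a_1\cap L=0$. Consequently $\ZZ a_1+L$ is a sublattice of $M$ of rank $1+(r-s)$, which yields $\ell(I)=\rank M\geq r-s+1$.

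For the polymatroidal case one must further show $M=\ZZ a_1+L$, which reduces to establishing $a_i-a_j\in L$ for every pair $i,j$. I would argue by induction on the $\ell^1$-distance $\|a_i-a_j\|_1$. If this distance is positive, pick a coordinate $k$ with $a_i(k)>a_j(k)$; the exchange property supplies $l$ with $a_j(l)>a_i(l)$ and an index $i'$ for which $a_{i'}=a_i-\epsilon_k+\epsilon_l$. The monomial identity $x_ku_{i'}=x_lu_i$ shows that $\{k,l\}\in E(\Gamma)$, so $a_i-a_{i'}=\epsilon_k-\epsilon_l\in L$; since $\|a_{i'}-a_j\|_1=\|a_i-a_j\|_1-2$, the induction hypothesis gives $a_{i'}-a_j\in L$, and hence $a_i-a_j\in L$. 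Summing these, $M=\ZZ a_1+L$ and $\rank M=r-s+1$, as required.

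The main point to verify carefully is the opening identification $\ell(I)=\rank M$, which genuinely uses that $I$ is generated in a single degree so that the toric relations among the $u_i$ are exactly the relations of the fiber ring; together with the elementary fact $\rank L=r-s$, this reduces the entire lemma to the one-step exchange argument above. I do not expect any further difficulty beyond that bookkeeping.
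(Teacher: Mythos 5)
Your proof is correct and follows essentially the same route as the paper: identify $\ell(I)$ with the rank of the exponent lattice, bound it below by the rank $r-s$ of the edge lattice of $\Gamma$ plus one (using that all edge vectors have coordinate sum zero while an exponent vector does not), and in the polymatroidal case use the exchange property and induction on distance to show every difference of exponent vectors lies in that edge lattice. The only cosmetic differences are that you work with $\ZZ$-lattices rather than $\QQ$-spans, you cite the incidence-matrix rank formula where the paper proves it directly component by component, and you spell out the identification $\ell(I)=\rank M$ via the fiber ring, which the paper simply asserts.
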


\begin{proof}
 Let $G(I)=\{\xb^{\ab_1},\ldots, \xb^{\ab_m}\}$ and $\B=\{\ab_1,\ldots,\ab_m\}$ be the set of exponent vectors of $G(I)$,  and let $A$ be the $m\times n$-matrix whose row vectors are $\ab_1,\ldots,\ab_m$. Then $\ell (I)$ is the rank of $A$.

Let $U\subset \QQ^n$ be the $\QQ$-vector space spanned  by all vectors $\ab_k-\ab_\ell$ with $\ab_k,\ab_\ell \in \B$ and such that $\ab_k-\ab_\ell=\pm \epsilon_{ij}$ for some $i<j$. Furthermore, let $\Gamma_1,\ldots, \Gamma_s$ be the connected components of $\Gamma(I)$. Then $U=U_1\dirsum U_2\dirsum \cdots \dirsum U_s$, where $U_t$ is $\QQ$-subspace of $U$ spanned by all vectors $\ab_k-\ab_\ell$ with $\ab_k-\ab_\ell=\pm \epsilon_{ij}$ and $\{i,j\}\in E(\Gamma_t)$.

We claim that $\dim U_t=|V(\Gamma_t)|-1$. Indeed, let $i$ be an edge of $\Gamma_t$. Then $\epsilon_i\not\in U_t$ since for each vector of $U_t$, the sum of its components is zero. The desired formula for $\dim U_t$ follows therefore from the identity $U_t+\QQ\epsilon_i=\Dirsum_{j \in V(\Gamma_t)}\QQ\epsilon_j$. To see that this identity holds, let $j\in V(\Gamma_t)$. Since $\Gamma_t$ is connected, there exist vertices $i=i_0,i_1,\ldots,i_k=j$ in $\Gamma_t$ such that $\{i_l,i_{l+1}\}$ is an edge of $\Gamma_t$ for all $l$. We prove by induction of the length $k$ of this path connecting $i$ and $j$ that $\epsilon_j\in U_t+\QQ\epsilon_i$. If $k=1$, then $\epsilon_{ij}=\epsilon_i-\epsilon_j\in U_t$, and hence $\epsilon_j\in U_t+\QQ\epsilon_i$. Now let $k>1$. By what we have seen, we have that $\epsilon_1\in U_t+\QQ\epsilon_i$. Since $i_1$ is connected to $j$ by a path of length $k-1$, we may apply our induction hypothesis and conclude that $\epsilon_j\in U_t+\QQ\epsilon_i$.

Summarizing what we found so far, we see that $\dim U=r-s$ where $s$ is the number of connected components of $\Gamma(I)$. Since all vectors in $U$ have component sum equal to zero, it follows that $U$ is a proper subspace of the vector space which is spanned by the column vectors of $A$. It follows that $r-s=\dim U\leq \rank A-1=\ell(I)-1$. 

Now assume that $I$ is polymatroidal. Then $\B$ is the basis of a polymatroid. We will show that in this case $\dim U= \rank A-1$. To see this we first notice that $\ab_k-\ab_\ell\in U$ for all $\ab_k,\ab_\ell\in \B$. We prove this by induction on the distance of $\ab_k$ and $\ab_\ell$ which, according to \cite{HH2}, is defined the be the number $$\dist(\ab_k,\ab_\ell)= 1/2(\sum_{i=1}^n|\ab_k(i)-\ab_\ell(i)|).$$
If $\dist(\ab_k,\ab_\ell)=1$, then $\ab_k-\ab_\ell\in U$ by the definition of $U$. Suppose now that $\dist(\ab_k,\ab_\ell) >1$. Then the exchange property implies that there exist $i$ and $j$ with  $\ab_k(i)>\ab_\ell(i)$ and $\ab_k(j)<\ab_\ell(j)$ such that $\ab:= \ab_k-\epsilon_{ij}\in \B$. Since $\dist(\ab-\ab_\ell)<\dist(\ab_k,\ab_\ell)$ our induction hypothesis implies that $\ab-\ab_\ell\in U$. Hence, since $\ab-\ab_k\in U$, we conclude that $\ab_k-\ab_\ell\in U$ as well.

Now by using that $\ab_k-\ab_\ell\in U$ for all $\ab_k,\ab_\ell\in \B$, we obtain that
\[
U+\QQ\ab_1=\QQ \B.
\]

Hence, since $\ab_1\not\in U$ (because its coefficient sum is not zero), we have $\rank  A =\dim \QQ\B=\dim(U+\QQ \ab_1)=\dim U+1$, as desired.
\end{proof}

\begin{Lemma}\label{ayesha}
Let $I$ be a polymatroidal ideal and $P \in V^*(I)$. Then $\ell (I(P)) \leq \ell (I) $.
\end{Lemma}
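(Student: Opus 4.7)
The plan is to express both $\ell(I)$ and $\ell(I(P))$ as the rank of a matrix of exponent vectors, and then observe that the matrix for $I(P)$ is obtained from the matrix for $I$ by first deleting columns and then selecting a subset of rows, neither of which can increase the rank.

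Write $G(I)=\{\xb^{\ab_1},\ldots,\xb^{\ab_m}\}$ and let $A$ be the $m\times n$ matrix whose rows are $\ab_1,\ldots,\ab_m$. By \cite[Corollary~2.2]{HRV} the monomial localization $I(P)$ is again polymatroidal; in particular $I$ and $I(P)$ are both generated in a single degree, and one has a corresponding exponent matrix $A'$ for $G(I(P))$. For any monomial ideal $J\subset S$ generated in a single degree $d$, every minimal monomial generator of $J^k$ has degree $dk$ and hence is a product of $k$ minimal generators of $J$; so the graded components of the fiber cone $\bar{\Rees}(J)=\Rees(J)/\mm\Rees(J)$ have these distinct monomial products as a $K$-basis, and $\bar{\Rees}(J)$ is identified with the toric $K$-subalgebra of $S$ generated by the minimal monomial generators of $J$. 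Its Krull dimension equals the $\QQ$-rank of the exponent matrix of those generators. Applied to $J=I$ and $J=I(P)$, this yields $\ell(I)=\rank A$ and $\ell(I(P))=\rank A'$.

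Next, the generators of $I(P)$ are obtained from those of $I$ by the $K$-algebra substitution $x_j\mapsto 1$ for $x_j\notin P$, so if $u_l(P)$ denotes the image of $u_l\in G(I)$, then $G(I(P))\subseteq\{u_l(P):1\leq l\leq m\}$. Let $B$ be the matrix whose rows are the exponent vectors of $u_1(P),\ldots,u_m(P)$, regarded as monomials in the variables of $P$; then $B$ is obtained from $A$ by deleting the columns indexed by the variables outside $P$, and $A'$ is in turn obtained from $B$ by selecting the rows corresponding to the minimal generators of $I(P)$. Consequently
\[
\ell(I(P))=\rank A'\leq\rank B\leq\rank A=\ell(I),
\]
which is the asserted inequality.

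The only genuine obstacle is justifying the identification $\ell(J)=\rank A$ for monomial ideals generated in a single degree; once that description of the fiber cone is in place, the rest is a bookkeeping observation about how rank behaves under row selection and column deletion. A minor point to guard against in that step is the possibility that two formal products of generators coincide as monomials in $S$, but this is harmless because the fiber cone collects the \emph{monomial} generators of $J^k$ rather than formal products, so the toric-subalgebra description (and hence the rank computation) is unaffected.
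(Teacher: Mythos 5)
Your proof is correct and takes essentially the same approach as the paper: both identify $\ell(I)$ and $\ell(I(P))$ with the ranks of the exponent matrices of the minimal generating sets and observe that the matrix for $I(P)$ is obtained from the one for $I$ by deleting columns and selecting rows, which cannot increase the rank. The only difference is that you spell out the justification of $\ell(J)=\rank A$ via the fiber cone, a fact the paper simply asserts in the proof of Lemma~\ref{needed}.
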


\begin{proof}
We know from \cite[Corollary 2.2]{HRV} that $I(P)$ is a polymatroidal ideal. Let $\P$ (resp. $\P'$) be the polymatroid which defines $I$ (resp. $I(P)$). Let $A$ be the $m\times n$-matrix whose row vectors define the bases of $\P$. Since $I(P)$ is obtained from $I$ by the substitution $x_i \mapsto 1$ for $i \notin P$, the matrix $A'$ whose row vector define the bases of $P'$ is obtained from $A$ by removing the columns of $A$ which correspond to exponents of $x_i \notin P$ and removing the rows of $A$ which do not correspond to the minimal generators of $I(P)$. It follows that $\rank A' \leq \rank A$, as desired.
\end{proof}

\begin{proof}[Proof of Theorem~\ref{astabdstab}]
Combining  Theorem~\ref{depthbounds} with Lemma~\ref{needed}  we obtain that  $$\depth S/I^{\ell(I)-1}=n-\ell(I).$$ By \cite[Corollary 2.5]{HRV},   $\depth S/I^{k}=n-\ell(I)$ for all $k\gg 0$ and by \cite[Proposition 2.3]{HRV}, $I$ satisfies the persistence property ($I$ even satisfies the strong persistence property, see Proposition~\ref{strongpolymatroidal}, so that $\depth S/I^{k}\geq n-\ell(I)$ for all $k$. We conclude that $\dstab(I)<\ell(I)$.

In order to prove that $\astab(I)<\ell(I)$, we observe that $P\in \Ass^\infty(I)$ if and only if $\ell(I(P))=\dim S(P)$, see Corollary~\ref{normalCM}. Now Theorem~\ref{depthbounds} together with Lemma~\ref{needed} imply that $\depth S(P)/I(P)^{\ell(I(P))-1}=0$ which is equivalent to saying that $P\in \Ass(I^{\ell(I(P))-1})$. It follows that
$\astab(I) \leq \max\{\ell(I(P))-1\:\; P\in \Ass^\infty(I)\}$. This implies that $\astab(I) <\ell(I)$ since $\ell(I(P))\leq \ell(I)$ for all $P\in \Ass^\infty(I)$, see Lemma~\ref{ayesha}.
\end{proof}

In view of Theorem~\ref{astabdstab} it is of interest to determine the analytic spread of polymatroidal ideals, or equivalently the number $r-s+1$ attached to the linear relation graph of a polymatroidal ideal. We discuss the polymatroidal ideals attached to the polymatroids introduced in Example~\ref{examplepolymatroid}. To begin with we first discuss the graphic matroids. For that purpose we recall some facts from graph theory. Let $G$ be a finite simple graph with vertex $V(G)$ and edge set $E(G)$. We denote by $c(G)$ the number of connected components of $G$, and for a subset $T \subset V(G)$ we denote by $G_T$ the graph restricted to the vertex set $T$. In particular for any vertex $v \in V(G)$ we set $G \setminus v =G_{ V(G) \setminus \{v\}}$.  A vertex $v$ of $G$ is called a {\em cutpoint} if $c(G) < c(G \setminus v)$. A connected graph with no cutpoints is called {\em biconnected}.

\medskip
\noindent
Let $G$ be a graph. The following facts are known from graph theory:

\medskip
(1) A maximal biconnected subgraph of $G$ is called a {\em biconnected component} of $G$. Let $G_1, \ldots, G_t$ be the biconnected components of $G$. Then $G = \bigcup G_i$ and any two distinct biconnected components intersect at most in a cutpoint.

\medskip
(2) Suppose $G$ is a biconnected. Then any two distinct edges belong to a cycle.

\begin{Proposition}\label{graphicmatroids}
Let $G$ be a graph and $I$ be the matroidal ideal attached to the graphic matroid of $G$. Let $G_1, \ldots, G_s$ be the biconnected components of $G$ which  contain more than one edge. Then $\ell(I) = |E(\bigcup_{i=1}^{s} G_i)| - s +1$.
\end{Proposition}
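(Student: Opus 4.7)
The plan is to read off the two numerical invariants $r$ and $s'$ attached to the linear relation graph $\Gamma$ of $I$, and then invoke Lemma~\ref{needed}, which gives $\ell(I)=r-s'+1$ since $I$ is polymatroidal. Here the vertex set of $\Gamma$ is a subset of $E(G)=[n]$, so that $r$ counts certain edges of $G$ and $s'$ counts the number of connected components of $\Gamma$.

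First I would interpret edges of $\Gamma$ combinatorially. The generators of $I$ are the monomials $u_T=\prod_{e\in E(T)}x_e$ where $T$ ranges over the spanning forests of $G$. The relation $x_e u_T=x_{e'} u_{T'}$ is equivalent to $E(T')=(E(T)\setminus\{e'\})\cup\{e\}$ with $e\neq e'$. Thus $\{e,e'\}\in E(\Gamma)$ if and only if there is a spanning forest $T$ of $G$ containing $e'$ but not $e$ such that the symmetric exchange $T'=T-e'+e$ is again a spanning forest. By the standard characterisation of the circuits of a graphic matroid, this happens exactly when $e$ and $e'$ lie in a common cycle of $G$: indeed, if $T$ is a spanning forest containing $e'$ and $e\notin T$, then $T+e$ contains a unique cycle $C(e,T)$, and $T-e'+e$ is a spanning forest iff $e'\in C(e,T)$, which amounts to $e$ and $e'$ lying in a common cycle of $G$ (by choosing $T$ to be any spanning forest that contains a suitable path between the endpoints of $e$ passing through $e'$).

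Next I would invoke the graph-theoretic fact, recalled just before the statement, that two distinct edges $e,e'$ of $G$ lie in a common cycle of $G$ iff they belong to the same biconnected component of $G$. Combining this with the previous step, the vertex set of $\Gamma$ consists exactly of those edges that are contained in some cycle of $G$, which is precisely $E(\bigcup_{i=1}^{s}G_i)$; the bridges (the biconnected components of size one) contribute no vertex to $\Gamma$. Moreover, two edges of $G$ are connected in $\Gamma$ iff they belong to the same $G_i$: within one biconnected component any two edges are joined by a common cycle, and edges in different $G_i$ cannot be joined in $\Gamma$ since no cycle of $G$ crosses a cutpoint. Hence
\[
r=|V(\Gamma)|=\Bigl|E\Bigl(\bigcup_{i=1}^{s}G_i\Bigr)\Bigr|,\qquad s'=s.
\]

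Finally, applying the equality case of Lemma~\ref{needed} to the polymatroidal ideal $I$ yields
\[
\ell(I)=r-s'+1=\Bigl|E\Bigl(\bigcup_{i=1}^{s}G_i\Bigr)\Bigr|-s+1,
\]
as claimed. The main obstacle I expect is the careful verification of the equivalence in the second paragraph, namely that an edge $\{e,e'\}$ of $\Gamma$ corresponds precisely to $e,e'$ lying in a common cycle; the other ingredients (Lemma~\ref{needed} and the standard biconnected-component facts) are immediate once this matroid-theoretic translation is in place.
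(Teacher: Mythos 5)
Your proposal is correct and follows essentially the same route as the paper: both identify the edges of $\Gamma$ with pairs of edges of $G$ lying on a common cycle (equivalently, in a common nontrivial biconnected component), deduce that the connected components of $\Gamma$ are complete graphs on the edge sets of the $G_i$, and then apply the equality case of Lemma~\ref{needed}. The only cosmetic difference is that you phrase the exchange step via the fundamental cycle $C(e,T)$ of $T+e$, whereas the paper constructs a spanning forest containing $E(C)\setminus\{e_j\}$ directly; these are the same argument.
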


\begin{proof}
Let $\Gamma$ be the linear relation graph of $I$. We see from Lemma~\ref{needed} that it is enough to show that $\Gamma$ has $|E(\bigcup_{i=1}^{r} G_i)|$ vertices and $r$ connected components. The matroidal ideal $I \subset K[x_i | e_i \in E(G) ]$ attached to the graphic matroid of $G$ is generated by the monomials $u_F = \prod_{e_i \in E(F)} x_i$, where $F$ is a spanning forest of $G$. Let $m$ be the number of biconnected components of $G$. First observe that each spanning forest $F$ of $G$ can be written as $\bigcup_{i=1}^m T_i$ where $T_1, \ldots, T_m$ are spanning trees of the distinct biconnected components of $G$. From the definition of $\Gamma$ we see that $\{i,j\}$ is an edge of $\Gamma$ if there exists a spanning forest $F$ with $e_i \in E(F)$ such that $(E(F) \setminus \{e_i\}) \cup \{e_j\}$ is also a spanning forest of $G$. If $e_i$ and $e_j$ are edges in different biconnected components of $G$, then for any spanning forest $F$ with $e_i \in F$, the subgraph with edge set $(E(F) \setminus \{e_i\}) \cup \{e_j\}$ contains a cycle. Indeed, this cycle is contained in the biconnected component which contains the edge  $e_j$. Hence if $G_1,\ldots, G_t$, $t\geq s$,  are all the biconnected components of $G$, then  $\Gamma$ is the disjoint union of the linear relation graphs $\Gamma_k$ of $G_k$ for $k=1,\ldots,t$. Obviously, if $G_k$ contains only one edge, then $\Gamma_k=\emptyset$. Thus $\Gamma=\Union_{k=1}^r\Gamma_k$, and this union is disjoint.

Next we show that each $\Gamma_i$  for $i=1,\ldots,s$ is a complete graph, which then yields the desired conclusion. Indeed, if $e_i$ and $e_j$ belong to same biconnected component $G_k$ of $G$,  there is a cycle $C$ in $G$ which passes  through $e_i$ and $e_j$.  We can construct a spanning forest $F$ of $G$ such that $E(C) \setminus \{e_j\}$ is contained in $E(F)$. Then $(E(F) \setminus \{e_i\}) \cup \{e_j\}$ is again a spanning tree of $G_k$. This shows that $\{i,j\} \in E(\Gamma)$.
\end{proof}

\begin{Example}\label{graphicexample}{\em
 Figure~\ref{graphic} shows a graph $G$ with 3 biconnected components and the linear relation graph $\Gamma$ of the matroidal ideal $I$ attached to $G$. From Proposition~\ref{graphicmatroids} we see that $\ell(I) =6$.
\begin{figure}[hbt]
\begin{center}
\psset{unit=1.5cm}
\begin{pspicture}(4.5,-0.5)(4.5,1)
\rput(-3.5,0){
\rput(3.52,-2){\psecurve[linewidth=1pt](1,1.8)(1.65,2.5)(1,3.2)(0.3,2.5)(1,1.8)(1.65,2.5)(1,3.2)(0.3,2.5)}
\rput(4.5,-2){\psecurve[linewidth=1pt](1,2.3)(1.65,2.5)(1,2.7)(0.3,2.5)(1,2.3)(1.65,2.5)(1,2.7)(0.3,2.5)}
\rput(5.5,-2){\psecurve[linewidth=1pt](1,1.8)(1.35,2.5)(1,3.2)(0.3,2.5)(1,1.8)(1.35,2.5)(1,3.2)(0.3,2.5)}
\pspolygon(4.5,0)(5,0.5)(4.5,1)(4,0.5)
\psline(5,0.5)(6,0.5)
\pspolygon(6,0.5)(6.5,0)(6.5,1)
\rput(4.5,0){$\bullet$}
\rput(5,0.5){$\bullet$}
\rput(4.5,1){$\bullet$}
\rput(4,0.5){$\bullet$}
\rput(5,0.5){$\bullet$}
\rput(6,0.5){$\bullet$}
\rput(6.5,0){$\bullet$}
\rput(6.5,1){$\bullet$}
\rput(5.5,-0.5){$G$}
}
\rput(1.5,0){
\pspolygon(4.5,0)(5,0.5)(4.5,1)(4,0.5)
\pspolygon(6,0.5)(6.5,0)(6.5,1)
\psline(4.5,1)(4.5,0)
\psline(4,0.5)(5,0.5)
\rput(4.5,0){$\bullet$}
\rput(5,0.5){$\bullet$}
\rput(4.5,1){$\bullet$}
\rput(4,0.5){$\bullet$}
\rput(5,0.5){$\bullet$}
\rput(6,0.5){$\bullet$}
\rput(6.5,0){$\bullet$}
\rput(6.5,1){$\bullet$}
\rput(5.5,-0.5){$\Gamma$}
}
\end{pspicture}
\end{center}
\caption{}\label{graphic}
\end{figure}
}
\end{Example}

Next we determine the analytic spread of transversal polymatroids. Unfortunately, for transversal matroids we do not know the answer. Let $I$ be a transversal polymatroidal ideal, then $I$ has a unique presentation $I=\prod_{i=1}^{m} P_i$, where each $P_i$ is a monomial prime ideal. Collecting the factors $P_i$ which are principal ideals it follows that $I= u J$ where $u$ is a monomial and $J$ is product of remaining monomial prime ideals $P_i$. Hence $J$ is also a transversal polymatroidal ideal. Since $\ell(I) = \ell(J)$, we may assume that $u=1$, and hence from the very beginning we may assume that none of the $P_i$ is a principal ideal. For $i= 1, \ldots, m$, we set

\[
F_i = \{j | \; x_j \in P_i \}.
\]

As usual we denote by $\langle F_1, \ldots, F_m \rangle$ the simplicial complex $\Delta$ generated by the $F_1,\ldots,F_m$. In other words, $\Delta$ is the simplicial complex on $\Union_{i=1}^m F_i$ with the property that $F\in \Delta$ if and only if $F\subset F_i$ for some $i$.

\begin{Proposition} \label{transversal}
Let $\Gamma$ be the linear relation graph of the transversal polymatroidal ideal $I= \prod_{i=1}^m P_i$, and let $\Delta_1,\ldots,\Delta_s$  be the connected components of the simplicial complex $\Delta=\langle F_1, \ldots, F_m \rangle$. Then $\Gamma$ has $s$ connected components $\Gamma_1,\ldots,\Gamma_s$, and for $k=1,\ldots,s$, the connected component $\Gamma_k$ is  the complete graph on the vertex set of $\Delta_k$.
\end{Proposition}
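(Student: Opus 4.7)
My plan is to identify the edges of $\Gamma$ directly with the connectivity of $\Delta$, from which both assertions of the proposition follow at once. Writing a generator of $I = \prod_{t=1}^m P_t$ as $\xb^{\ab_\phi}$ with $\ab_\phi = \sum_{t=1}^m \epsilon_{\phi(t)}$ for a choice function $\phi\colon [m]\to [n]$ satisfying $\phi(t)\in F_t$, the defining condition $x_i u_k = x_j u_l$ of an edge $\{i,j\}\in E(\Gamma)$ translates into the existence of two such functions $\phi,\psi$ with
\[
\sum_{t=1}^m (\epsilon_{\phi(t)}-\epsilon_{\psi(t)}) = \epsilon_j - \epsilon_i.
\]
The central claim is: for $i\neq j$, one has $\{i,j\}\in E(\Gamma)$ if and only if $i$ and $j$ lie in the same connected component of $\Delta$. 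Granted this claim, the proposition is immediate: $\Gamma$ and $\Delta$ share the same partition into components, each $\Gamma_k$ is complete on $V(\Delta_k)$, and $V(\Gamma)=V(\Delta)$ because the standing assumption $|F_t|\ge 2$ forces every vertex of $\Delta$ to share a facet (hence a component) with at least one other vertex.

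For the ``if'' direction I would start with a path $i=v_0,v_1,\ldots,v_r=j$ in the $1$-skeleton of $\Delta$, where each $\{v_p,v_{p+1}\}$ lies in some facet $F_{t_p}$. By a shortcutting argument the facet indices $t_0,\ldots,t_{r-1}$ may be taken pairwise distinct: if $t_p=t_q$ with $p<q$, then $v_p$ and $v_{q+1}$ both lie in $F_{t_p}$, so the intermediate portion of the path can be removed. One then defines $\psi(t_p)=v_p$ and $\phi(t_p)=v_{p+1}$ for $0\le p\le r-1$ and sets $\phi(t)=\psi(t)$ equal to an arbitrary element of $F_t$ on the remaining positions; the telescoping sum $\sum_p(\epsilon_{v_{p+1}}-\epsilon_{v_p})=\epsilon_j-\epsilon_i$ yields the required edge.

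For the ``only if'' direction I would read the displayed identity as a flow equation on the directed multigraph whose edges are the ordered pairs $(\psi(t),\phi(t))$ for those $t$ with $\phi(t)\neq\psi(t)$; each such pair sits inside the facet $F_t$ and is therefore an edge of $\Delta$. The right-hand side $\epsilon_j-\epsilon_i$ records that this flow has source $i$, sink $j$, and is balanced at every other vertex, so a standard flow decomposition extracts a directed walk from $i$ to $j$ through edges of $\Delta$, placing $i$ and $j$ in the same connected component.

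The main obstacle I foresee is the combinatorial bookkeeping in the ``if'' direction, namely arranging a path whose facet indices $t_p$ are pairwise distinct so that the definitions of $\phi$ and $\psi$ are simultaneously well-defined; once this is handled, the flow decomposition in the reverse direction, the identification of components, and the verification $V(\Gamma)=V(\Delta)$ are all routine.
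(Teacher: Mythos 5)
Your proof is correct, and it follows the same overall strategy as the paper --- both arguments reduce the proposition to the single claim that $\{i,j\}\in E(\Gamma)$ if and only if $i$ and $j$ lie in the same connected component of $\Delta$ --- but the two halves are implemented differently. For the ``if'' direction the paper works with the intersection graph of the facets of a component $\Delta_k$, picks a spanning tree, and uses one variable from each of the $t-1$ pairwise intersections $F_p\cap F_q$ to build a single monomial $u$ of degree $m-1$ with $x_iu,x_ju\in G(I)$; your construction instead walks along the $1$-skeleton and perturbs a choice function $\phi$ along a path whose facet labels have been made pairwise distinct by shortcutting. The two constructions are essentially equivalent in difficulty (your shortcutting step plays the role of the paper's spanning tree), though yours is slightly more economical in that it only needs a path rather than a tree, and the well-definedness issue you flag is exactly handled by the shortcutting. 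The real divergence is in the ``only if'' direction: the paper observes that every generator $w\in G(I)$ factors uniquely as $w=w_1\cdots w_s$ with $\supp(w_k)\subset V(\Delta_k)$ and that $\deg w_k$ is forced to equal the number of facets lying in $\Delta_k$, so $x_iu=x_jv$ with $i,j$ in different components is impossible for degree reasons --- a two-line invariant argument. Your flow-decomposition of $\sum_t(\epsilon_{\phi(t)}-\epsilon_{\psi(t)})=\epsilon_j-\epsilon_i$ is also valid and has the merit of actually exhibiting a walk from $i$ to $j$ inside $\Delta$, but it is heavier machinery than needed; the component-wise degree count is the cleaner route. Your explicit verification that $V(\Gamma)=V(\Delta)$ under the standing assumption that no $P_i$ is principal is a point the paper leaves implicit, and it is good that you addressed it.
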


\begin{proof}
It is enough to show that $\{i,j\}$ is an edge of $\Gamma$ if and only if $i$ and $j$ belong to the vertex set of $\Delta_k$ for some $k$. Let $i$ and $j$ be two vertices of $\Delta_k$. We may assume that $\Delta_k = \langle F_1, \ldots, F_t \rangle $ with $t \leq m$. For $\Delta_k$, we define the so-called intersection graph whose vertices are the $F_i$, and $\{F_i, F_j\}$ is an edge of this graph if $F_i \cap F_j \neq \emptyset$. This graph is connected because $\Delta_k$ is connected. In particular, we may choose a spanning tree from this graph. This spanning tree has $t-1$ distinct edges. Therefore, there exist $t-1$ distinct pairs $(F_p, F_q)$ such that $I_{pq}=F_p \cap F_q \neq \emptyset$ for $p,q \in \{1, \ldots, t\}$. We construct a monomial $u$ of degree $m-1$ whose support consists of $t-1$ variables chosen with indices from each of $I_{pq}$ and the remaining variables chosen with indices from each of $F_l$ with $t+1 \leq l \leq m$. Then $x_i u$ and $x_j u$ belong to the set of generators of $I$ and give the edge $\{i,j\}$ in $\Gamma$, as required.

Conversely, let $\{i,j\}$ be an edge of $\Gamma$. Let $V(\Delta) = \{1, \ldots, n\}$ be the vertex set of $\Delta$. Then $[n]$ is the disjoint union of $V(\Delta_k)$, $k= 1, \ldots, s$. Therefore any monomial has a unique presentation $w = w_1 \ldots w_s$ with $\supp(w_k) \in V(\Delta_k)$. Moreover, if $w \in G(I)$ then $\deg w_k$ is the number of $F_i$'s which belong to $\Delta_k$. Since $\{i,j\}$ is an edge of $\Gamma$, there exist $u,v \in G(I)$ such that $x_i u= x_j v$. Suppose that $i\in V(\Delta_k)$ and $j \in V(\Delta_l)$ with $k \neq l$. Then $x_i u_k = v_k$, which is impossible by degree reasons. This completes the proof.
\end{proof}

Finally we discuss the case of polymatroidal ideals of Veronese type. Let $I$ be such an  ideal. Thus we may  assume that $I$ is generated in degree $d$ and  that the exponent vector of any $w \in G(I)$ is bounded componentwise by the vector $\cb= (c_1, \ldots, c_n)$. To exclude the trivial cases we may assume without loss of generality that $\sum_{i=1}^n c_i > d$ and $c_i >0$. It is easy to see that $\Gamma$ has only one connected component and $V(\Gamma) = \supp(I)$ where $\supp(I)$ denotes the set of integers $i$ with the property that $x_i$ divides one of the generators of $I$. Indeed, let $i, j \in \supp(I)$ with $i\neq j$. We may assume that $i=1$ and $j=2$. We choose a monomial $u \in G(I)$ with $u=x_1^{c_1} x_2^{c_2-1} v$ where $v$ is the monomial of degree $d- c_1 - c_2 +1$ with $1,2 \notin \supp(v)$, and whose exponent vector is componentwise bounded by $(c_3, \ldots, c_n)$. Such a monomial exist since $\sum_{i=3}^n c_i \geq d- c_1-c_2+1$. Therefore the monomial $x_2 u / x_1$ has degree $d$ and has exponent vector bounded by $\cb$. Hence $x_2 u / x_1 \in G(I)$, which implies $\{1,2\}$ is an edge of $\Gamma$. In particular, $\ell (I) = \supp (I)$.

{}

 \end{document}